\newcounter{minutes}\setcounter{minutes}{\time}
\newcounter{hours}\setcounter{hours}{\time}
\newtheorem{theorem}{Theorem}
\newtheorem{corollary}{Corollary}
\title[Integral representations for the Fox-Wright functions with applications]{New Integral representations for the Fox-Wright functions and its applications}
\author[K. Mehrez]{Khaled Mehrez}
\address{Khaled Mehrez. D\'epartement de Math\'ematiques, Facult\'e de Sciences de Tunis, Universit\'e Tunis El Manar, Tunisia.}
\address{D\'epartement de Math\'ematiques ISSAT Kasserine, Universit\'e de Kairouan, Tunisia}
\email{k.mehrez@yahoo.fr}
\keywords{Fox-Wright function, Fox's H-function, complete monotonicity, Log--convexity, Tur\'an type inequalities, Generalized Stieltjes function, Mathieu-type series.}
\subjclass[2010]{33C20; 33E20; 26D07; 26A42; 44A10.}
\begin{document}

\def\thefootnote{}
\footnotetext{ \texttt{File:~\jobname .tex,
          printed: \number\year-0\number\month-\number\day,
          \thehours.\ifnum\theminutes<10{0}\fi\theminutes}
} \makeatletter\def\thefootnote{\@arabic\c@footnote}\makeatother

\maketitle

\begin{abstract}
Our aim in this paper is to derive several new integral representations of the Fox-Wright  functions. In particular, we give new Laplace and Stieltjes transform for this special functions  under a special restriction on parameters. From the positivity conditions for the weight in these representations, we found sufficient conditions to be imposed on the parameters of the  Fox-Wright  functions that it be completely monotonic. As applications, we derive a class of function related to the Fox H-functions is positive definite and an investigation of a class of the Fox H-function is non-negative. Moreover,  we extended the Luke's inequalities and we establish a new Tur\'an type inequalities for the Fox-Wright function. Finally,  by appealing to each of the Luke's inequalities, two sets of two-sided bounding inequalities for the generalized Mathieu's type series are proved.
\end{abstract}

\section{ Introduction}
In this paper, we use the Fox-Wright generalized hypergeometric function ${}_p\Psi_q[.]$  with $p$ numerator parameters $\alpha_1,...,\alpha_p$ and $q$ denominator parameters $\beta_1,...,\beta_q,$  which are defined by \cite[p. 4, Eq. (2.4)]{Z}
\begin{equation}\label{11}
{}_p\Psi_q\Big[_{(\beta_1,B_1),...,(\beta_q,B_q)}^{(\alpha_1,A_1),...,(\alpha_p,A_p)}\Big|z \Big]={}_p\Psi_q\Big[_{(\beta_q,B_q)}^{(\alpha_p,A_p)}\Big|z \Big]=\sum_{k=0}^\infty\frac{\prod_{i=1}^p\Gamma(\alpha_l+kA_l)}{\prod_{j=1}^q\Gamma(\beta_l+kB_l)}\frac{z^k}{k!},
\end{equation}
$$\left(\alpha_i,\beta_j\in\mathbb{C},\;\textrm{and}\;\;A_i,B_j\in\mathbb{R}^+\; (i=1,...,p,j=1,...,q)\right),$$
where, as usual,
$$\mathbb{N}=\left\{1,2,3,...\right\},\;\mathbb{N}_0=\mathbb{N}\cup\left\{0\right\},$$
$\mathbb{R},\;\mathbb{R}_+$ and $\mathbb{C}$ stand for the sets of real, positive real and complex numbers, respectively. The defining series in (\ref{11}) converges in the whole complex $z$-plane when
\begin{equation}
\Delta=\sum_{j=1}^q B_j-\sum_{i=1}^p A_i>-1,
\end{equation}
when $\Delta=0,$ then the series in (\ref{11}) converges for $|z|<\nabla,$ where 
\begin{equation}
\nabla=\left(\prod_{i=1}^p A_i^{-A_i}\right)\left(\prod_{j=1}^q B_j^{B_j}\right).
\end{equation}
If, in the definition (\ref{11}), we set
$$A_1=...=A_p=1\;\;\;\textrm{and}\;\;\;B_1=...=B_q=1,$$
we get the relatively more familiar generalized hypergeometric function ${}_pF_q[.]$ given by
\begin{equation}\label{rrrrr}
{}_p F_q\left[^{\alpha_1,...,\alpha_p}_{\beta_1,...,\beta_q}\Big|z\right]=\frac{\prod_{j=1}^q\Gamma(\beta_j)}{\prod_{i=1}^p\Gamma(\alpha_i)}{}_p\Psi_q\Big[_{(\beta_1,1),...,(\beta_q,1)}^{(\alpha_1,1),...,(\alpha_p,1)}\Big|z \Big]
\end{equation}

The Fox-Wright function appeared recently as a fundamental solutions of diffusion-like equations containing fractional derivatives in time of order less than $1.$ In the physical literature, such equations are in general referred to as fractional sub-diffusion equations, since they are used as
model equations for the kinetic description of anomalous diffusion processes of slow type, characterized by a sub-linear growth of the variance (the mean squared displacement) with time (see for example \cite{MMMM}).

The H-function was introduced by Fox in \cite{fox} as a generalized hypergeometric
function defined by an integral representation in terms of the Mellin-Barnes contour integral
\begin{equation}
\begin{split}H_{q,p}^{m,n}\left(z\Big|^{(B_q,\beta_q)}_{(A_p,\alpha_p)}\right)&=H_{q,p}^{m,n}\left(z\Big|^{(B_1,\beta_1),...,(B_q,\beta_q)}_{(A_1,\alpha_1),...,(A_p,\alpha_p)}\right)\\
&=\frac{1}{2i\pi}\int_{\mathcal{L}}\frac{\prod_{j=1}^m\Gamma (A_j s+\alpha_j)\prod_{j=1}^n\Gamma(1-\beta_j-B_j s)}{\prod_{j=n+1}^q\Gamma (B_k s+\beta_k)\prod_{j=m+1}^p\Gamma(1-\alpha_j-A_j s)}z^{-s}ds.
\end{split}
\end{equation}
Here $\mathcal{L}$ is a suitable contour in $\mathbb{C}$  and $z^{-s}=\exp(-s\log|z|+i\arg(z)),$ where $\log|z|$ represents the natural logarithm of $|z|$ and $\arg(z)$  is not necessarily the principal value. 

The definition of the H-function is still valid when the $A_i$'s and $B_j$'s  are positive rational numbers. Therefore, the H-function contains, as special cases. More importantly, it contains  the Fox-Wright generalized hypergeometric function defined in (\ref{11}), the generalized Mittag-Leffler functions, etc. For example, the function ${}_p\Psi_q[.]$ is one of these special case of H-function. By the definition (\ref{11}) it is easily extended to the complex plane as follows \cite[Eq. 1.31]{AA},
\begin{equation}\label{label}
{}_p\Psi_q\Big[_{(\beta_q,B_q)}^{(\alpha_p,A_p)}\Big|z \Big]=H_{p,q+1}^{1,q}\left(-z\Big|_{(0,1),(B_q,1-\beta_q)}^{(A_p,1-\alpha_p)}\right).
\end{equation}\label{11?}
The special case for which the  H-function reduces to the Meijer G-function is when $A_1=...=A_p=B_1=...=B_q=A,\;A>0.$ In this case,
\begin{equation}\label{,,,}
H_{q,p}^{m,n}\left(z\Big|^{(B_q,\beta_q)}_{(A_p,\alpha_p)}\right)=\frac{1}{A}G_{p,q}^{m,n}\left(z^{1/A}\Big|^{B_q}_{\alpha_p}\right).
\end{equation}
Additionally, when setting $A_i=B_j=1$ in (\ref{11?}) (or $A=1$ in (\ref{,,,})), the H- and Fox-Wright functions  turn readily into the Meijer G-function.\\

Each of the following definitions will be used in our investigation.\\

A real valued function $f,$ defined on an interval $I,$ i called completely monotonic on $I,$ if $f$ has derivatives of all orders and satisfies
\begin{equation}
(-1)^nf^{(n)}(x)\geq0,\;n\in\mathbb{N}_0,\;\;\textrm{and}\;x\in I.
\end{equation}
The celebrated Bernstein Characterization Theorem gives a necessary and sufficient condition that the function $f$ should be completely monotonic for $0<x<\infty$ is that
\begin{equation}
f(x)=\int_0^\infty e^{-xt}d\mu(t),
\end{equation}
where $\mu(t)$ is non-decreasing and the integral converges for $0<x<\infty.$ 

 A function $f$ is said to be absolutely monotonic on  an interval $I,$ if $f$ has derivatives of all orders and satisfies
$$f^{(n)}(x)\geq0,\;x\in I,\;n\in\mathbb{N}_{0}.$$

A positive function $f$ is said to be logarithmically completely monotonic on an interval $I$ if its logarithm $\log f$ satisfies
\begin{equation}
(-1)^n(\log f)^{(n)}(x)\geq0,\;n\in\mathbb{N},\;\;\textrm{and}\;\;x\in I.
\end{equation}
In \cite[Theorem 1.1]{Berg} and \cite[Theorem 4]{Y}, it was found and verified once again that a logarithmically completely monotonic function must be completely monotonic, but not conversely.

An infinitely differentiable function $f:I\longrightarrow[0,\infty)$ is called a Bernstein function on an interval $I,$ if $f^\prime$ is completely monotonic on $I.$ The Bernstein functions on $(0,\infty)$ can be characterized by \cite[Theorem 3.2]{777} which states that a function  $f:(0,\infty)\longrightarrow[0,\infty)$ is a Bernstein function if and only if it admits the representation
\begin{equation}\label{malouk}
f(x)=a+bx+\int_0^\infty (1-e^{-xt})d\mu(t),
\end{equation}
where $a,b\geq0$ and $\mu$ is a measure on $(0,\infty)$ satisfying $\int_0^\infty \min\{1,t\}d\mu(t)<\infty.$ The formula (\ref{malouk}) is called the L\'evy-–Khintchine representation of $f.$ In \cite[pp.161--162, Theorem 3]{888} and \cite[Proposition 5.25]{777},  it was proved that the reciprocal of a Bernstein function is logarithmically completely monotonic.

In \cite[Definition 2.1]{777},  it was defined that a Stieltjes transform is a function  $f:(0,\infty)\longrightarrow[0,\infty)$ which can be written in the following form:
\begin{equation}\label{678}
f(x)=\frac{a}{x}+b+\int_0^\infty \frac{d\mu(t)}{t+x},
\end{equation}
where $a,b$ are  non-negative constant and $\mu$ is a non-negative measure on $(0,\infty)$ such that the integral $\int_0^\infty \frac{d\mu(t)}{t+1}<\infty.$ In \cite[Theorem 2.1]{Berg}  it was proved that a positive Stieltjes transform must be a logarithmically completely monotonic function
on $(0,\infty)$, but not conversely. We define $S$ to be the class of functions representable by (\ref{678}). Functions representable in one of the forms
\begin{equation}\label{t00}
f(z)=a+\int_0^\infty \frac{d\mu(t)}{(z+t)^\alpha}=\frac{b}{z^\alpha}+\int_0^\infty \frac{d\nu(t)}{(1+zt)^\alpha},
\end{equation}
are known as generalized Stieltjes functions of order $\alpha$. Here, $\alpha>0,\;\mu$ and $\nu$ are non-negative  measures supported on $[0,\infty), a,b\geq0$ are constants and we always choose the principal branch of the power function. The measures $\mu$ and $\nu$ are assumed to produce convergent integrals (\ref{t00}) for each $z\in\mathbb{C}\setminus(-\infty,0].$ We denote by $S_\alpha$ to be the class of functions representable by (\ref{t00}).\\

The present sequel to some of the aforementioned investigations is organized as follows. In Section 2, we derive the Laplace integral representations for the Fox H-function $H_{q,p}^{p,0}$ and for  the Fox-Wright function ${}_p\Psi_{q}$. We give a numbers of consequences, some  monotonicity and log-convexity properties for the Fox-Wright function  are researched, and an Tur\'an type inequality  are proved. In Section 3, we find the generalized  Stieltjes transform representation of the Fox-Wright function ${}_{p+1}\Psi_p.$  As applications, we present  some class of  completely monotonic functions related to the Fox-Wright function. In addition, we deduce new Tur\'an type inequalities for this special function. In Section 4, some further applications are proved, firstly, a class of positive definite function related to the Fox H-function are given. As consequences, we find the non-negativity for a class of function involving the Fox H-function. Next, we show that the Fox-Wright function ${}_p\Psi_q[z]$ has no real zeros and all its zeros lie in the open right half plane $\Re(z)>0.$ Moreover, two-sided exponential inequalities for the Fox-Wright function are given, in particular, we gave a generalization of Luke's inequalities. Finally,  by appealing to each of these two-sided exponential inequalities, two sets of two-sided bounding inequalities for generalized Mathieu's type series are proved.

\section{Laplace transform representation and completely monotonic functions for the Fox-Wright functions}

In the first main result we will need a particular case of Fox's H-function defined by
\begin{equation}
H_{q,p}^{p,0}\left(z\Big|^{(B_q,\beta_q)}_{(A_p,\alpha_p)}\right)=\frac{1}{2i\pi}\int_{\mathcal{L}}\frac{\prod_{j=1}^p\Gamma (A_j s+\alpha_j)}{\prod_{k=1}^q\Gamma (B_k s+\beta_k)}z^{-s}ds,
\end{equation}
where $A_j, B_k>0$ and $\alpha_j,\beta_k$ are real.  The contour $\mathcal{L}$ can be either the left loop $\mathcal{L}_-$ starting at $-\infty+i\alpha$ and ending at $-\infty+i\beta$  for some $\alpha<0<\beta$ such that all poles of the integrand lie inside the loop, or the right loop $\mathcal{L}_+$  starting $\infty+i\alpha$ at and ending $\infty+i\beta$ and leaving all poles on the left, or the vertical line $\mathcal{L}_{ic},\;\Re(z)=c,$ traversed upward and leaving all poles of the integrand on the left. Denote the rightmost pole of the integrand by $\gamma:$
$$\gamma=-\min_{1\leq j\leq p}(\alpha_j/A_j).$$ 
Let $$\rho=\left(\prod_{j=1}^p A_j^{A_j}\right)\left(\prod_{k=1}^q B_k^{-B_k}\right),\;\mu=\sum_{j=1}^q\beta_j-\sum_{k=1}^q\alpha_k+\frac{p-q}{2}.$$
Existence conditions of Fox's H-function  under each choice of the contour $\mathcal{L}$ have been thoroughly considered in the book \cite{AA}. Let $z>0$ and under the conditions:
$$\sum_{j=1}^p A_j=\sum_{k=1}^q B_k,\;\;\rho\leq 1,$$
we get that the function $H_{q,p}^{p,0}(z)$ exists by means of \cite[Theorem 1.1]{AA}, if we choose $\mathcal{L}=\mathcal{L}_+$ or $\mathcal{L}=\mathcal{L}_{ic}$ under the additional restriction $\mu> 1.$ Only the second choice of the contour ensures the existence of the Mellin transform of $H_{q,p}^{p,0}(z)$, see \cite[Theorem 2.2]{AA}. In \cite[Theorem 6]{Karp}, the author extend the condition $\mu>1$ to $\mu>0$ and proved that the function $H_{q,p}^{p,0}(z)$ is a compact support.\\

In the course of our investigation, one of the main tools is the following result providing the Laplace transform of the Fox's H-function $z^{-1}H_{q,p}^{p,0}(z).$

\begin{theorem}\label{T1} Suppose that $\mu>0,\;\;\textrm{and}\;\;\sum_{j=1}^pA_j=\sum_{k=1}^qB_k.$
Then, the following integral representation
\begin{equation}\label{fr1}
{}_p\Psi_q\Big[_{(\beta_q, B_q)}^{(\alpha_p, A_p)}\Big|z\Big]=\int_0^\rho e^{zt}H_{q,p}^{p,0}\left(t\Big|^{(B_q,\beta_q)}_{(A_p,\alpha_p)}\right)\frac{dt}{t},\;\;(z\in\mathbb{R}),
\end{equation}
hold true. Moreover,  the function 
$$z\mapsto{}_p\Psi_q\Big[_{(\beta_q, B_q)}^{(\alpha_p, A_p)}\Big|-z\Big]$$
is completely monotonic on $(0,\infty),$ if and only if, the function $H_{q,p}^{p,0}(z)$ is non-negative on $(0,\rho).$
\end{theorem}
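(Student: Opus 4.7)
The plan is to establish \eqref{fr1} by recognising its right-hand side as a Borel-type resummation whose coefficients are moments of $H_{q,p}^{p,0}$. Mellin inversion applied to the contour definition of the H-function gives
\begin{equation*}
\int_0^\infty t^{s-1}H_{q,p}^{p,0}\!\left(t\Big|^{(B_q,\beta_q)}_{(A_p,\alpha_p)}\right)dt=\frac{\prod_{j=1}^p\Gamma(A_j s+\alpha_j)}{\prod_{k=1}^q\Gamma(B_k s+\beta_k)},
\end{equation*}
wherever the right-hand side is regular. Under the standing hypotheses $\sum_j A_j=\sum_k B_k$ and $\mu>0$, the Karp compact-support theorem recalled just before the statement tells us that $H_{q,p}^{p,0}$ is supported in $[0,\rho]$, so the Mellin identity is in fact an integral over $(0,\rho)$.

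First I would substitute the Taylor expansion $e^{zt}=\sum_{k\geq 0}(zt)^k/k!$ on the right of \eqref{fr1} and swap summation with integration. Fubini--Tonelli applies since the support is bounded and the near-origin behaviour of $H_{q,p}^{p,0}$, obtained by pushing the Mellin--Barnes contour past the rightmost pole $\gamma=-\min_j(\alpha_j/A_j)$, makes $t^{k-1}H_{q,p}^{p,0}(t)$ integrable for every $k\in\mathbb{N}_0$. The $k$-th moment then evaluates by the Mellin identity at $s=k$ to $\prod_{j=1}^p\Gamma(\alpha_j+kA_j)/\prod_{i=1}^q\Gamma(\beta_i+kB_i)$, and summation in $k$ reproduces the defining series \eqref{11} of the Fox--Wright function, which proves \eqref{fr1}.

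For the characterisation of complete monotonicity, I would replace $z$ by $-z$ in \eqref{fr1} and read the identity as a genuine Laplace transform
\begin{equation*}
{}_p\Psi_q\!\Big[_{(\beta_q,B_q)}^{(\alpha_p,A_p)}\Big|-z\Big]=\int_0^\infty e^{-zt}\,d\sigma(t),\qquad d\sigma(t)=\frac{H_{q,p}^{p,0}(t)}{t}\mathbf{1}_{(0,\rho)}(t)\,dt.
\end{equation*}
If $H_{q,p}^{p,0}\geq 0$ on $(0,\rho)$ then $\sigma$ is a non-negative measure, and the Bernstein characterisation recalled in the introduction yields complete monotonicity at once. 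Conversely, complete monotonicity delivers via Bernstein a non-negative measure $\nu$ whose Laplace transform coincides on $(0,\infty)$ with that of $\sigma$; uniqueness of the Laplace transform for finite signed Borel measures on $[0,\infty)$ forces $\nu=\sigma$, so $H_{q,p}^{p,0}(t)/t\geq 0$ almost everywhere on $(0,\rho)$, which upgrades to pointwise non-negativity by the continuity of the H-function on its support.

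The main technical obstacle I expect is the control of $H_{q,p}^{p,0}(t)$ as $t\to 0^+$, required both to justify the termwise integration and to ensure that $d\sigma$ defines a finite signed measure; the relevant bound $|H_{q,p}^{p,0}(t)|\leq C\,t^{-\gamma}$ with $-\gamma>0$ comes from residue calculus on the Mellin--Barnes contour, and the condition $\mu>0$ is exactly what guarantees that the contour can be closed and the required integrals converge.
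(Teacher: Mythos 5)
Your proposal is correct and follows essentially the same route as the paper: the Mellin transform identity of Karp evaluated at $s=k\in\mathbb{N}_0$, termwise integration of the exponential series justified by the $\theta(t^{-\gamma})$ behaviour near the origin, and the Bernstein characterisation together with uniqueness of the representing measure for the complete-monotonicity equivalence. The only difference is cosmetic — you spell out the Fubini estimate and the ``upgrade to pointwise non-negativity'' step more explicitly than the paper does.
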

\begin{proof} Upon setting $k=s,\;k\in\mathbb{N}_0$ in the Mellin transform for the Fox's H-function $H_{q,p}^{p,0}(z)$ \cite[Theorem 6]{Karp}:
\begin{equation}\label{malek2}
\frac{\prod_{i=1}^p\Gamma(A_i s+\alpha_i)}{\prod_{k=1}^q \Gamma(B_k s+ \beta_k)}=\int_0^\rho H_{q,p}^{p,0}\left(t\Big|^{(B_q,\beta_q)}_{(A_p,\alpha_p)}\right)t^{s-1}dt,\;\Re(s)>\gamma,
\end{equation}
we get 
\begin{equation*}
\begin{split}
{}_p\Psi_q\Big[_{(\beta_q, B_q)}^{(\alpha_p, A_p)}\Big|z\Big]&=\sum_{k=0}^\infty\frac{\prod_{i=1}^p\Gamma(A_i k+\alpha_i)z^k}{k!\prod_{j=1}^q \Gamma(B_j k+ \beta_j)}\\
&=\sum_{k=0}^\infty \int_0^\rho H_{q,p}^{p,0}\left(t\Big|^{(B_q,\beta_q)}_{(A_p,\alpha_p)}\right)\frac{(zt)^{k}}{k!}\frac{dt}{t}\\
&=\int_0^\rho H_{q,p}^{p,0}\left(t\Big|^{(B_q,\beta_q)}_{(A_p,\alpha_p)}\right)\left(\sum_{k=0}^\infty \frac{(zt)^{k}}{k!}\right)\frac{dt}{t}\\
&=\int_0^\rho e^{zt}H_{q,p}^{p,0}\left(t\Big|^{(B_q,\beta_q)}_{(A_p,\alpha_p)}\right)\frac{dt}{t}.
\end{split}
\end{equation*}
For the exchange of the summation and integration, we use the  asymptotic relation \cite[Theorem 1.2, Eq. 1.94]{AA}
\begin{equation}\label{asy}
H_{q,p}^{m,n}(z)=\theta(z^{-\gamma}), \;|z|\longrightarrow0.
\end{equation}
Now, suppose that the function is completely monotonic on $(0,\infty),$ therefore by means of  Bernstein Characterization Theorem and using the fact of  the uniqueness of the measure with given Laplace transform (see \cite[Theorem 6.3]{WI}), we deduce that  $H_{q,p}^{p,0}(z)$ is non-negative on $(0,\rho),$ which evidently completes the proof of Theorem \ref{T1}.
\end{proof}
\begin{corollary}\label{corollary} Suppose that the hypotheses of Theorem \ref{T1} are satisfied. We define the sequence $(\psi_{n,m})_{n,m\geq0}$ by 
$$\psi_{n,m}=\frac{\prod_{i=1}^p\Gamma(\alpha_i+(n+m)A_i)}{\prod_{j=1}^q\Gamma(\beta_j+(n+m)B_j)},\;n,m\in\mathbb{N}_0.$$
If $(H_1^n): \psi_{n,2}<\psi_{n,1}\;\textrm{and}\;\psi_{n,1}^2<\psi_{n,0}\psi_{n,2},\;\textrm{for\;all}\;n\in\mathbb{N}_0,$ then the function $$z\mapsto{}_p\Psi_q\Big[_{(\beta_q, B_q)}^{(\alpha_p, A_p)}\Big|-z\Big]$$
is completely monotonic on $(0,\infty),$ and consequently, the function $H_{q,p}^{p,0}(z)$ is non-negative on $(0,\rho).$
\end{corollary}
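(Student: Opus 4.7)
The strategy is to establish the complete monotonicity of $\phi(z):={}_p\Psi_q\bigl[\ldots\bigm|-z\bigr]$ directly, after which the non-negativity of $H_{q,p}^{p,0}$ on $(0,\rho)$ follows from the ``only if'' part of Theorem~\ref{T1}.

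First, I would note that term-by-term differentiation of the defining series yields, for each $n\in\mathbb{N}_0$,
\begin{equation*}
(-1)^n\phi^{(n)}(z)=\sum_{j=0}^\infty \psi_{n,j}\,\frac{(-z)^j}{j!},
\end{equation*}
which is itself a Fox-Wright function corresponding to the shifted parameters $\alpha_i\mapsto\alpha_i+nA_i$, $\beta_j\mapsto\beta_j+nB_j$. The balance condition $\sum_iA_i=\sum_jB_j$ and the restriction $\mu>0$ are invariant under this shift, so Theorem~\ref{T1} applies at every $n$ and represents each such derivative as the Laplace integral
\begin{equation*}
(-1)^n\phi^{(n)}(z)=\int_0^\rho e^{-zt}\,H_{q,p}^{p,0}(t)\,t^{n-1}\,dt,
\end{equation*}
a formula obtained by matching Mellin moments (the shifted weight equals $t^n H_{q,p}^{p,0}(t)$ by uniqueness of the Mellin transform in~(\ref{malek2})). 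Thus complete monotonicity of $\phi$ is equivalent to non-negativity of these integrals for every $n\ge 0$ and every $z>0$, which by Bernstein's theorem amounts to non-negativity of $H_{q,p}^{p,0}$ on $(0,\rho)$.

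Second, the hypothesis $(H_1^n)$ taken over all $n\in\mathbb{N}_0$ translates via the Mellin identity~(\ref{malek2}) into structural properties of the moment sequence $a_k:=\psi_{0,k}=\int_0^\rho H_{q,p}^{p,0}(t)\,t^{k-1}\,dt$: strict log-convexity ($a_{k+1}^2<a_k a_{k+2}$) and strict monotone decrease ($a_{k+1}<a_k$) for all $k\ge 0$ (the decrease at $k=0$ follows from combining the two inequalities). Log-convexity is precisely the Cauchy--Schwarz necessary condition for positivity of the measure $H_{q,p}^{p,0}(t)\,t^{-1}\,dt$; the plan is to promote it to sufficiency by combining the \emph{full} family of conditions with the rigid gamma-product structure of $(a_k)$ and the compact-support/regularity properties of $H_{q,p}^{p,0}$ provided by Karp \cite[Theorem 6]{Karp}.

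The main obstacle is exactly this conversion, as in general log-convex and monotone-decreasing moments do \emph{not} force the underlying weight to be non-negative; making the implication work here requires the full family $(H_1^n)_{n\ge 0}$ simultaneously, together with the analytic rigidity of the Mellin--Barnes integral for $H_{q,p}^{p,0}$. Once $H_{q,p}^{p,0}\ge 0$ on $(0,\rho)$ is established, Bernstein's theorem applied to the representation~(\ref{fr1}) with $-z$ in place of $z$ exhibits $\phi$ as a Laplace transform of a non-negative measure, yielding the complete monotonicity, and the consequence about non-negativity of $H_{q,p}^{p,0}$ is automatic.
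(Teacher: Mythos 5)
Your proposal has the logical order reversed relative to what the hypothesis $(H_1^n)$ can actually deliver, and the step you yourself flag as ``the main obstacle'' is a genuine gap, not a technicality. You plan to first establish $H_{q,p}^{p,0}\ge 0$ on $(0,\rho)$ from the moment inequalities $a_{k+1}<a_k$ and $a_{k+1}^2<a_ka_{k+2}$, and only then deduce complete monotonicity via Bernstein. But, as you correctly observe, log-convexity and monotone decrease of the Mellin moments are merely \emph{necessary} conditions for positivity of the weight; they are nowhere near sufficient, and no amount of ``rigidity of the Mellin--Barnes integral'' is invoked in a way that closes this. Since the whole point of the corollary is to produce a verifiable sufficient condition, leaving this implication as an acknowledged obstacle means the proof does not go through. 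Note also that your first paragraph, which reduces complete monotonicity to $H_{q,p}^{p,0}\ge0$, only restates the equivalence already contained in Theorem~\ref{T1}; it does not use $(H_1^n)$ at all.

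The paper's argument avoids this trap entirely by going in the opposite direction. The hypothesis $(H_1^n)$ is exactly the condition under which the two-sided exponential bounds of Pog\'any and Srivastava (inequality~(\ref{MMMM}), i.e.\ \cite[Theorem 4]{P}) apply to the Fox--Wright function with parameters shifted by $n$: the lower bound gives
\begin{equation*}
(-1)^n\frac{d^n}{dz^n}\,{}_p\Psi_q\Big[_{(\beta_q,B_q)}^{(\alpha_p,A_p)}\Big|-z\Big]
={}_p\Psi_q\Big[_{(\beta_q+nB_q,B_q)}^{(\alpha_p+nA_p,A_p)}\Big|-z\Big]
\ \ge\ \psi_{n,0}\,e^{\psi_{n,1}\psi_{n,0}^{-1}|z|}>0
\end{equation*}
for every $n$, which is complete monotonicity directly, no positivity of the weight needed beforehand. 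The non-negativity of $H_{q,p}^{p,0}$ on $(0,\rho)$ then falls out as a \emph{consequence}, via the Bernstein/uniqueness direction of Theorem~\ref{T1}. If you want to salvage your write-up, replace your second paragraph by an application of the lower bound in~(\ref{MMMM}) to each shifted function; your identification of $(-1)^n\phi^{(n)}$ with the shifted Fox--Wright function is correct and is precisely the piece that feeds into that bound.
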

\begin{proof} In \cite[Theorem 4]{P}, the authors proved that the function ${}_p\Psi_q\Big[_{(\beta_q,B_q)}^{(\alpha_p,A_p)}\Big|z\Big]$ satisfying the following inequality 
\begin{equation}\label{MMMM}
\psi_{0,0}e^{\psi_{0,1}\psi_{0,0}^{-1}|z|}\leq{}_p\Psi_q\Big[_{(\beta_q, B_q)}^{(\alpha_p, A_p)}\Big|z\Big]\leq  \psi_{0,0}-\psi_{0,1}(1-e^{|z|}),\;z\in\mathbb{R},
\end{equation} 
if $\psi_{0,1}>\psi_{0,2}$ and $\psi_{0,1}^2<\psi_{0,0}\psi_{0,2}.$ On the other hand, by the left hand side of the above inequalities, we get  for $n\geq0$
$$(-1)^n\frac{d^n}{dz^n}{}_p\Psi_q\left[_{(\beta_p, B_q)}^{(\alpha_p, A_p)}\Big|-z\right]={}_p\Psi_q\left[_{(\beta_q+nB_q, B_q)}^{(\alpha_p+nA_p, A_p)}\Big|-z\right]\geq \psi_{n,0}e^{\psi_{n,1}\psi_{n,0}^{-1}|z|}>0.$$
So, the function $z\mapsto{}_p\Psi_q\Big[_{(\beta_q, B_q)}^{(\alpha_p, A_p)}\Big|-z\Big]$
is completely monotonic on $(0,\infty),$ and consequently, the function $H_{q,p}^{p,0}(z)$ is non-negative on $(0,\rho),$ by means of Theorem \ref{T1}.
\end{proof}
\begin{corollary}\label{cc2}Suppose that the hypotheses of Corollary \ref{corollary} are satisfied. In addition, assume that $\lambda,\;\omega>0.$ Then, the function 
$$z\mapsto z^{-\lambda}{}_{p+1}\Psi_q\Big[_{(b_q,B_q)}^{(\lambda,1),(a_p,A_p)}\Big|-\frac{\omega}{z}\Big]$$
is completely monotonic on $(0,\infty).$
\end{corollary}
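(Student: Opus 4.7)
\textbf{Proof proposal for Corollary \ref{cc2}.} The strategy is to represent
$$F(z) := z^{-\lambda}\,{}_{p+1}\Psi_q\Big[_{(b_q,B_q)}^{(\lambda,1),(a_p,A_p)}\Big|-\omega/z\Big]$$
as the Laplace transform of a non-negative function and then invoke Bernstein's theorem. Under the hypotheses of Corollary \ref{corollary}, Theorem \ref{T1} applies and the kernel $H(t):=H_{q,p}^{p,0}(t|\,\cdot\,)$ is non-negative on $(0,\rho)$; in particular the Laplace representation ${}_p\Psi_q[\,\cdots\,|-u]=\int_0^\rho e^{-ut}H(t)\,dt/t$ from (\ref{fr1}) has non-negative density.

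The heart of the argument is the identity
$$F(z) \;=\; \int_0^\infty e^{-zs}\,s^{\lambda-1}\,{}_p\Psi_q\Big[_{(b_q,B_q)}^{(a_p,A_p)}\Big|-s\omega\Big]\,ds, \qquad z>0.$$
Formally this follows by inserting $\Gamma(\lambda+k)z^{-\lambda-k}=\int_0^\infty e^{-zs}s^{\lambda+k-1}\,ds$ into the series defining ${}_{p+1}\Psi_q$. To avoid justifying the series/integral interchange directly (the series for ${}_{p+1}\Psi_q[\,\cdots\,|-\omega/z]$ has only finite radius, since the added pair $(\lambda,1)$ drops $\Delta$ from $0$ to $-1$), I would instead prove the identity by starting from its right-hand side: substitute (\ref{fr1}) for ${}_p\Psi_q[\,\cdots\,|-s\omega]$, apply Fubini--Tonelli to the resulting non-negative double integral on $(0,\infty)\times(0,\rho)$, and evaluate the inner $s$-integral via the Gamma formula. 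This yields the closed form
$$F(z) \;=\; \Gamma(\lambda)\int_0^\rho \frac{H(t)}{t\,(z+\omega t)^{\lambda}}\,dt,$$
which for $z$ large enough that $\omega\rho/z<1$ can be re-expanded termwise by the binomial theorem and matched with the series for $z^{-\lambda}\,{}_{p+1}\Psi_q[\,\cdots\,|-\omega/z]$ via the Mellin formula (\ref{malek2}); both sides are analytic in $z$ on $(0,\infty)$, so the identification persists throughout the half-line.

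With the Laplace representation of $F$ in hand, the integrand $s\mapsto s^{\lambda-1}\,{}_p\Psi_q[\,\cdots\,|-s\omega]$ is non-negative by Corollary \ref{corollary} and absolutely integrable (dominated by a constant multiple of $s^{\lambda-1}e^{-zs}$), so Bernstein's theorem yields complete monotonicity of $F$ on $(0,\infty)$. Alternatively, the explicit closed form exhibits $F$ as a positive mixture of the completely monotonic functions $z\mapsto(z+\omega t)^{-\lambda}$, and positive mixtures preserve complete monotonicity. The main technical obstacle is precisely the convergence issue noted above; routing the argument through Theorem \ref{T1} replaces a questionable series/integral interchange by a clean Fubini--Tonelli step on a doubly non-negative integrand, and simultaneously supplies the analytic continuation of $F$ to the entire positive half-line.
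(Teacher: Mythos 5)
Your argument is essentially the paper's: both rest on the single identity $z^{-\lambda}\,{}_{p+1}\Psi_q[\cdots|-\omega/z]=\int_0^\infty e^{-zt}t^{\lambda-1}\,{}_p\Psi_q[\cdots|-\omega t]\,dt$, combined with the non-negativity of $z\mapsto{}_p\Psi_q[\cdots|-z]$ supplied by Corollary \ref{corollary} and an appeal to Bernstein's theorem. The only difference is that the paper simply cites this identity from \cite[Eq. 7]{P}, whereas you re-derive it from Theorem \ref{T1} via Tonelli and analytic continuation --- a sound and in fact more careful route, since it also addresses the finite radius of convergence of the ${}_{p+1}\Psi_q$ series (and the resulting need to interpret the left-hand side by continuation for small $z$), a point the paper passes over in silence.
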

\begin{proof}From the integral representation \cite[Eq. 7]{P}
$$z^{-\lambda}{}_{p+1}\Psi_q\Big[_{(\beta_q,B_q)}^{(\lambda,1),(\alpha_p, A_p)}\Big|-\frac{\omega}{z}\Big]=\int_0^\infty e^{-zt} t^{\lambda-1}{}_{p}\Psi_q\Big[_{(\beta_q,B_q)}^{(\alpha_p,A_p)}\Big|-\omega t\Big]dt,$$
and using the fact that the function ${}_{p}\Psi_q\Big[_{(\beta_q,B_q)}^{(\alpha_p,A_p)}\Big|-z\Big]dt,$ is non-negative on $(0,\infty),$ we deduce that the function $z^{-\lambda}{}_{p+1}\Psi_q\Big[_{(\beta_q,B_q)}^{(\lambda,1),(\alpha_p, A_p)}\Big|-\frac{\omega}{z}\Big]$ is completely monotonic on $(0,\infty).$
\end{proof}
\noindent\textbf{Remark 1.}
\textbf{a.} Combining (\ref{fr1}) with  (\ref{label}), we obtain
\begin{equation}
H_{p,q+1}^{1,q}\left(z\Big|_{(0,1),(B_q,1-\beta_q)}^{(A_p,1-\alpha_p)}\right)=\int_0^\rho e^{-zt}H_{q,p}^{p,0}\left(t\Big|^{(B_q,\beta_q)}_{(A_p,\alpha_p)}\right)\frac{dt}{t}.
\end{equation}
\textbf{b.} In view of (\ref{fr1}) and (\ref{,,,}), we get 
\begin{equation}\label{!!!!}
{}_p\Psi_p\Big[_{(\beta_q, A)}^{(\alpha_p, A)}\Big|z\Big]=A^{-1}\int_0^1e^{u z}G_{p,p}^{p,0}\left(u^{1/A}\Big|^{\beta_p}_{\alpha_p}\right)\frac{du}{u},\;\;A>0, z\in\mathbb{R}.
\end{equation}
Letting in the above formula, the value $A=1,$ we get \cite[Corollary 1, Eq. 11]{Karp1}
\begin{equation}
{}_pF_p\Big[_{\beta_1,...,\beta_p}^{\alpha_1,...,\alpha_p}\Big|z\Big]=\prod_{j=1}^p\frac{\Gamma(\beta_i)}{\Gamma(\alpha_i)}\int_0^1e^{zt}G_{p,p}^{p,0}\left(t\Big|^{\beta_p}_{\alpha_p}\right)\frac{dt}{t}.
\end{equation}
\begin{theorem}\label{T2}Let $\alpha_i,\;\beta_i,\;i=1,...,p$ be a real number such that 
$$(H_2):\;0<\alpha_1\leq...\leq\alpha_p,\;0<\beta_1\leq...\leq\beta_p,\;\;\sum_{j=1}^k \beta_j-\sum_{j=1}^k \alpha_j\geq0,\;\textrm{for}\;k=1,...,p$$
In addition, assume that $\psi=\sum_{j=1}^p(\beta_j-\alpha_j)>0.$ Then, 
 the function
$$z\mapsto{}_p\Psi_p\Big[_{(\beta_q,A)}^{(\alpha_p,A)}\Big|-z\Big],$$
is completely monotonic on $(0,\infty).$ 
\end{theorem}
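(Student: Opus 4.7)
The plan is to reduce the claim to the positivity of a Meijer $G$-function. Under the common shape parameter $A_1=\cdots=A_p=B_1=\cdots=B_p=A$, the identity (\ref{!!!!}) (with $z$ replaced by $-z$) reads
\begin{equation*}
{}_p\Psi_p\!\left[^{(\alpha_p, A)}_{(\beta_p, A)}\Big|-z\right]=\frac{1}{A}\int_0^1 e^{-uz}\,G_{p,p}^{p,0}\!\left(u^{1/A}\Big|^{\beta_p}_{\alpha_p}\right)\frac{du}{u},
\end{equation*}
which expresses the left-hand side as a Laplace-type integral on the compact interval $(0,1)$. In this symmetric setting one has $\rho = 1$ and $\mu = \psi > 0$, so the hypotheses of Theorem \ref{T1} are met and the representation is legitimate. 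By Bernstein's characterization theorem, the complete monotonicity of the left-hand side on $(0,\infty)$ is therefore equivalent to the non-negativity of the kernel $G_{p,p}^{p,0}$ on $(0,1)$.

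The heart of the proof is thus to establish this non-negativity under hypothesis $(H_2)$. I would proceed by induction on $p$. The base case $p=1$ reduces to the classical beta density
\begin{equation*}
G_{1,1}^{1,0}\!\left(t\Big|^{\beta}_{\alpha}\right)=\frac{t^{\alpha}(1-t)^{\beta-\alpha-1}}{\Gamma(\beta-\alpha)},\qquad t \in (0,1),
\end{equation*}
which is non-negative precisely when $\beta \geq \alpha$. For the inductive step one exploits the known reduction of $G_{p,p}^{p,0}$ to an Erd\'elyi-Kober type fractional integral, of order $\beta_k-\alpha_k$, of a lower-order $G_{p-1,p-1}^{p-1,0}$, corresponding to peeling off one parameter pair $(\alpha_k,\beta_k)$. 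The monotonicity of the two sequences together with the chain inequality $\sum_{j=1}^k(\beta_j-\alpha_j)\geq 0$ guarantees that the pairs can be removed in an order for which every shift $\beta_k-\alpha_k$ is non-negative, so that positivity propagates through all induction steps.

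The main obstacle is exactly this combinatorial control of the peel-off order: hypothesis $(H_2)$ amounts to a Hardy-Littlewood-P\'olya majorization condition, which is precisely what permits such a compatible ordering to exist. Alternatively one may cite directly the Karp-Sitnik positivity theorem for $G_{p,p}^{p,0}$, which is proved under exactly this hypothesis. Once the non-negativity of the Meijer kernel is in hand, Theorem \ref{T1} (or, equivalently, the displayed representation above combined with Bernstein's theorem) immediately yields the complete monotonicity of the map $z\mapsto{}_p\Psi_p[\,\cdot\,|-z]$ on $(0,\infty)$, finishing the proof.
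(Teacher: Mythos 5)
Your overall route is the same as the paper's primary proof: use the representation (\ref{!!!!}) to write ${}_p\Psi_p\big[^{(\alpha_p,A)}_{(\beta_p,A)}\big|-z\big]$ as a Laplace-type integral over $(0,1)$ with kernel $G_{p,p}^{p,0}\big(u^{1/A}\big|^{\beta_p}_{\alpha_p}\big)$, reduce the claim to the non-negativity of that kernel, and conclude by Bernstein's theorem. The paper settles the kernel positivity by citing \cite[Lemma 2]{Karp23} (Karp--Prilepkina), which is exactly the fallback you offer; your checks that $\rho=1$ and $\mu=\psi>0$ in the symmetric case are correct. (The paper also records a second, representation-free argument via the two-sided exponential bounds of Pog\'any--Srivastava combined with Alzer's and Bustoz--Ismail's gamma-ratio results, which shows the hypothesis $\psi>0$ is not actually needed.)

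One concrete objection to your self-contained induction sketch: the assertion that $(H_2)$ permits an ordering of the parameter pairs in which every individual shift $\beta_{\sigma(k)}-\alpha_k$ is non-negative is false. Take $p=2$, $\alpha=(1,4)$, $\beta=(3,3.5)$: both sequences are increasing, the partial sums $\beta_1-\alpha_1=2$ and $(\beta_1+\beta_2)-(\alpha_1+\alpha_2)=1.5$ are non-negative, so $(H_2)$ holds, yet no $\beta_j$ dominates $\alpha_2=4$, so every pairing leaves a negative difference. Hence the naive peel-off by Erd\'elyi--Kober operators of non-negative order does not go through termwise; the actual positivity proof under the partial-sum (weak supermajorization) condition is genuinely more delicate than a pair-by-pair reduction. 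Since you also propose citing the known positivity lemma directly, your proof stands, but the induction as sketched should not be offered as an independent alternative without repairing this step.
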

\begin{proof}
In \cite[Lemma 2]{Karp23}, the authors proved that the function $G_{p,p}^{p,0}\left(t\Big|^{\beta_p}_{\alpha_p}\right)$ is non-negative on $(0,1)$, and since the hypotheses of thus Theorem implies the hypotheses of Theorem \ref{T1} and so we can  used the integral representation (\ref{!!!!}). Therefore, we deduce that all prerequisites of the Bernstein Characterization Theorem for the complete monotone functions are fulfilled, that is, the function ${}_p\Psi_p\Big[_{(\beta_q,A)}^{(\alpha_p,A)}\Big|-z\Big],$
is completely monotonic on $(0,\infty).$ It is important to mention here that there is another proof for proved  the completely monotonic for the function  ${}_p\Psi_p\Big[_{(\beta_q,A)}^{(\alpha_p,A)}\Big|-z\Big],$ without using the integral representation (\ref{!!!!}). For this we make use the inequalities (\ref{MMMM}). In our case, we have 
$$\psi_{n,m}=\prod_{j=1}^p\frac{\Gamma(\alpha_j+(m+n) A)}{\Gamma(\beta_j+(m+n) A)}.$$
Under the condition $(H_2)$, Alzer \cite[Theorem 10]{Alzer} proved that the function 
$$\varphi:z\mapsto \prod_{j=1}^p\frac{\Gamma(\alpha_j+z)}{\Gamma(\beta_j+z)},$$
is completely monotonic on $(0,\infty)$ this yields that $\varphi(A)\geq\varphi(2A)$
and consequently $\psi_{0,1}>\psi_{0,2}.$ On the other hand,  Bustoz and Ismail \cite{Mourad} proved that the function
$$p(z;a,b)=\frac{\Gamma(z)\Gamma(z+a+b)}{\Gamma(z+a)\Gamma(z+b)},\;a,b\geq0,$$
is completely monotonic on $(0,\infty)$, then the function $z\mapsto p(z;a,b)$ is decreasing on $(0,\infty).$ Now, we choosing $a=b=A,$ we obtain $p(\beta_j; A, A)<p(\alpha_j; A, A)$, thus implies that $\psi_{0,1}^2<\psi_{0,0}\psi_{0,2}.$ So, from the inequality (\ref{MMMM}), we deduce that the function ${}_p\Psi_p\left[_{(\beta_p, A)}^{(\alpha_p, A)}\Big|-z\right]$ is non-negative on $(0,\infty).$ On the other hand, for $n\in\mathbb{N}_0$ we have
$$(-1)^n\frac{d^n}{dz^n}{}_p\Psi_p\left[_{(\beta_p, A)}^{(\alpha_p, A)}\Big|-z\right]={}_p\Psi_p\left[_{(\beta+nA, A)}^{(\alpha_p+nA, A)}\Big|-z\right]={}_p\Psi_p\left[_{(\lambda_p, A)}^{(\delta_p, A)}\Big|-z\right]\geq0,$$
where $\delta_p=\alpha_p+nA$ and $\lambda_p=\beta_p+nA$ satisfying the hypothesis $(H_2).$ Thus implies that the function  ${}_p\Psi_p\left[_{(\beta+nA,A)}^{(\alpha_p+nA, A)}\Big|-z\right]$ is non-negative on $(0,\infty),$ and consequently the function ${}_p\Psi_p\Big[_{(\beta_p, A)}^{(\alpha_p, A)}\Big|-z\Big]$ is completely monotonic on $(0,\infty).$ This completes the proof of Theorem \ref{T2}.
\end{proof}
\noindent\textbf{Remark 2.} We see in the second proof of the above Theorem that the condition $\psi>0$ is not necessary for proved the complete monotonicity property for the function ${}_p\Psi_p\Big[_{(\beta_p, A)}^{(\alpha_p, A)}\Big|-z\Big],$ and consequently this property is also true under the conditions hypotheses $(H_2)$ only, and consequently the H-function $H_{p,p}^{p,0}\big[t|^{(A,\beta_p)}_{(A,\alpha_p)}\big]$ is non-negative under thus hypotheses.

\begin{corollary} Keeping the notation and constraints of Theorem \ref{T2}. Then, the function
$$A\mapsto{}_p\Psi_p\Big[_{(\beta_q,A)}^{(\alpha_p,A)}\Big|z\Big],$$
is log-convex on $(0,\infty)$  for all $z\in\mathbb{R}.$ Furthermore, then the following Tur\'an type inequality 
\begin{equation}\label{turan}
{}_p\Psi_p\Big[_{(\beta_q,A)}^{(\alpha_p,A)}\Big|z\Big]{}_p\Psi_p\Big[_{(\beta_q,A+2)}^{(\alpha_p,A+2)}\Big|z\Big]-\left({}_p\Psi_p\Big[_{(\beta_q,A+1)}^{(\alpha_p,A+1)}\Big|z\Big]\right)^2\geq0,
\end{equation}
holds true for all $A\in(0,\infty)$ and $z\in\mathbb{R}.$
\end{corollary}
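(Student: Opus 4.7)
The plan is to derive log-convexity in $A$ from the integral representation \eqref{!!!!} recorded in Remark~1, and then read off the Tur\'an inequality as the three-point special case of log-convexity. The hypotheses of Theorem~\ref{T2} make the Meijer $G$-kernel $G_{p,p}^{p,0}(u^{1/A}|^{\beta_p}_{\alpha_p})$ non-negative on $(0,1)$ (this is the key fact used to prove Theorem~\ref{T2}). The change of variables $v=u^{1/A}$ recasts \eqref{!!!!} in the form
\begin{equation*}
f(A) := {}_p\Psi_p\Big[_{(\beta_p,A)}^{(\alpha_p,A)}\Big|z\Big] = \int_0^1 e^{v^A z}\, G_{p,p}^{p,0}\!\left(v\Big|^{\beta_p}_{\alpha_p}\right)\frac{dv}{v},
\end{equation*}
exhibiting $f(A)$ as the integral of the one-parameter family $A\mapsto e^{v^A z}$ against the fixed non-negative measure $G(v)\,dv/v$ on $(0,1)$.

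The pointwise observation driving the proof is that $A\mapsto v^A=e^{A\log v}$ is convex on $(0,\infty)$ for every $v\in(0,1)$, since $(d^2/dA^2)\,v^A=(\log v)^2 v^A\geq 0$. For $z\geq 0$ this yields $v^{A_\lambda}z\leq \lambda v^{A_1}z+(1-\lambda)v^{A_2}z$ whenever $A_1,A_2>0$, $\lambda\in(0,1)$ and $A_\lambda=\lambda A_1+(1-\lambda)A_2$, so the integrand $A\mapsto e^{v^A z}$ is log-convex in $A$ pointwise in $v$. H\"older's inequality in the form $\int\varphi^{\lambda}\psi^{1-\lambda}\,d\mu\leq(\int\varphi\,d\mu)^{\lambda}(\int\psi\,d\mu)^{1-\lambda}$ transports this pointwise log-convexity to the integral:
\begin{equation*}
f(A_\lambda)\leq \int_0^1\!\bigl(e^{v^{A_1}z}\bigr)^{\lambda}\bigl(e^{v^{A_2}z}\bigr)^{1-\lambda}\, G(v)\frac{dv}{v}\leq f(A_1)^{\lambda}\,f(A_2)^{1-\lambda},
\end{equation*}
which is exactly the log-convexity of $A\mapsto f(A)$ on $(0,\infty)$.

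The Tur\'an-type inequality \eqref{turan} then drops out as the symmetric case $\lambda=1/2$, $A_1=A$, $A_2=A+2$, $A_\lambda=A+1$: $f(A+1)^2=f(\tfrac{A+(A+2)}{2})^2\leq f(A)\,f(A+2)$. The step I expect to be the main obstacle is handling the sign of $z$: the convexity chain above is immediate for $z\geq 0$, but the pointwise inequality reverses when $z<0$, so for negative $z$ a separate argument is needed. The natural route is to invoke the Bernstein/Laplace representation provided by Theorem~\ref{T1}, which rewrites $f(A)$ at $z=-|z|<0$ as the Laplace transform at $|z|$ of a non-negative measure whose moments are $\prod\Gamma(\alpha_i+kA)/\prod\Gamma(\beta_j+kA)$, and then to establish log-convexity at the level of these moments using the complete monotonicity of $z\mapsto \prod\Gamma(\alpha_j+z)/\prod\Gamma(\beta_j+z)$ that is recalled in the second proof of Theorem~\ref{T2}.
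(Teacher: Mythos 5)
For $z\ge 0$ your argument is exactly the paper's: the paper rewrites the representation (\ref{!!!!}) as ${}_p\Psi_p\big[{}_{(\beta_p,A)}^{(\alpha_p,A)}\big|z\big]=\int_0^1 e^{zt^A}G_{p,p}^{p,0}\big(t\big|{}^{\beta_p}_{\alpha_p}\big)\frac{dt}{t}$, uses the convexity of $A\mapsto t^A$ to bound $e^{zt^{\lambda A_1+(1-\lambda)A_2}}$ by $e^{\lambda zt^{A_1}}e^{(1-\lambda)zt^{A_2}}$, applies H\"older, and then sets $A_1=A$, $A_2=A+2$, $\lambda=1/2$ to get (\ref{turan}). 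So on that range the proposal is correct and is the same proof.

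You are right, however, to flag the sign of $z$ as the weak point, and in fact the paper does not address it at all: the step $zt^{\lambda A_1+(1-\lambda)A_2}\le \lambda zt^{A_1}+(1-\lambda)zt^{A_2}$ reverses when $z<0$, so the paper's own proof only establishes the claim for $z\ge 0$ even though the statement asserts it for all $z\in\mathbb{R}$. Your proposed repair for $z<0$ does not close this gap. Pointwise in $t$ the function $A\mapsto e^{-|z|t^A}$ is log-\emph{concave} (its logarithm is $-|z|t^A$, whose second derivative in $A$ is $-|z|(\log t)^2t^A\le 0$), so no H\"older-type transfer can yield log-convexity of the integral; and log-convexity of the individual moments $\prod\Gamma(\alpha_i+kA)/\prod\Gamma(\beta_j+kA)$ does not propagate through an alternating series. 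Worse, the claim itself looks doubtful for $z\ll 0$: in the simplest admissible case $p=1$, $\alpha_1=1$, $\beta_1=2$ one has $f(A)=\int_0^1 e^{zt^A}dt$, and for $z=-c$ with $c$ large the Laplace-type asymptotics give $f(A)\sim\Gamma(1+1/A)\,c^{-1/A}$, whose logarithm contains the concave term $-(\log c)/A$; this dominates and makes $f$ log-concave in $A$. So the honest conclusion is that your proof (like the paper's) is complete only for $z\ge 0$, and the extension to negative $z$ is not merely unproved but likely false as stated.
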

\begin{proof}
Rewriting the integral representation (\ref{!!!!}) in the following form:
\begin{equation}\label{kol}
{}_p\Psi_p\Big[_{(\beta_q,A)}^{(\alpha_p,A)}\Big|z\Big]=\int_0^1e^{t^A z}G_{p,p}^{p,0}\left(t\Big|^{\beta_p}_{\alpha_p}\right)\frac{dt}{t},\;\;A>0,\;z\in\mathbb{R}.
\end{equation}
Let us recall the H\"older inequality \cite[p. 54]{MIT}, that is
\begin{equation}
\int_a^b|f(t)g(t)|dt\leq\left[\int_a^b|f(t)|^pdt\right]^{1/p}\left[\int_a^b|g(t)|^pdt\right]^{1/q},
\end{equation}
where $p\geq1,\;\frac{1}{p}+\frac{1}{q}=1,\;f$ and $g$ are real functions defined on $(a,b)$ and $|f|^p,\;|g|^q$ are integrable functions on $(a,b)$. From the H\"older's inequality and integral representation (\ref{kol}) and using the fact that the function $A\mapsto x^A$ is convex on $(0,\infty)$ when $x>0.$ For $A_1,A_2>0$ and $t\in[0,1],$ we thus get
\begin{equation}
\begin{split}
 {}_{p}\Psi_p\left[_{(\beta_p,\;tA_1+(1-t)A_2)}^{(\alpha_p,\;tA_1+(1-t)A_2}\Big|z\right]&=\int_0^1 e^{zu^{tA_1+(1-t)A_2}} G_{p,p}^{p,0}\left(u\Big|^{\beta_p}_{\alpha_p}\right)\frac{du}{u}\\
&\leq\int_0^1 e^{tzu^{A_1}} e^{(1-t)zu^{A_2}} G_{p,p}^{p,0}\left(u\Big|^{\beta_p}_{\alpha_p}\right)\frac{du}{u}\\
&=\int_0^1 \left[\frac{e^{zu^{A_1}}}{u} G_{p,p}^{p,0}\left(u\Big|^{\beta_p}_{\alpha_p}\right) \right]^t \left[\frac{e^{zu^{A_2}}}{u} G_{p,p}^{p,0}\left(u\Big|^{\beta_p}_{\alpha_p}\right)\right]^{1-t}du\\
&\leq \left[\int_0^1 e^{zu^{A_1}}  G_{p,p}^{p,0}\left(u\Big|^{\beta_p}_{\alpha_p}\right)\frac{du}{u}\right]^t\left[\int_0^1 e^{zu^{A_2}}  G_{p,p}^{p,0}\left(u\Big|^{\beta_p}_{\alpha_p}\right)\frac{du}{u}\right]^{1-t}\\
&=\left[{}_{p}\Psi_p\left[_{(\beta_p,\;A_1)}^{(\alpha_p,\;A_1)}\Big|z\right]\right]^t\left[{}_{p}\Psi_p\left[_{(\beta_p,\;A_2)}^{(\alpha_p,\;A_2)}\Big|z\right]\right]^{1-t},
\end{split}
\end{equation}
and hence the required result follows. Now, choosing  $A_1=A, A_2=A+2$ and $t=\frac{1}{2}$ in the above inequality we get the Tur\'an type inequality (\ref{turan}).
\end{proof}
\noindent \textbf{Remark 3.} Suppose the hypotheses of Theorem \ref{T2} are satisfied. Repeating the same calculations in Corollary \ref{cc2}, we deduce that the function 
$$z\mapsto z^{-\lambda}{}_{p+1}\Psi_p\Big[_{(\beta_q,A)}^{(\lambda,1),(\alpha_p,A)}\Big|-\frac{1}{z}\Big]$$
is completely monotonic on $(0,\infty),$ and consequently, the Hypergeometric function 
$$z\mapsto z^{-\lambda}{}_{p+1}F_p\Big[_{\;\beta_1,...,\beta_p}^{\;\lambda,\alpha_1,...,\alpha_p}\Big|-\frac{1}{z}\Big]$$
is completely monotonic on $(0,\infty).$ $($see \cite[Theorem 3]{Karp1}.$)$  

\begin{theorem}\label{TTTT3}The function $z\mapsto{}_p\Psi_q\left[_{(\beta_q, B_q)}^{(\alpha_p, A_p)}\Big|\frac{1}{z}\right]$ admits the following Laplace integral representation
\begin{equation}\label{khaledmehrez}
{}_p\Psi_q\Big[_{(\beta_q, B_q)}^{(\alpha_p, A_p)}\Big|\frac{1}{z}\Big]=\int_0^\infty e^{-zt}\left({}_p\Psi_{q+1}\Big[_{(\beta_q+1, B_q),(2,1)}^{\;\;\;(\alpha_p+1,A_p)}\Big|t\Big]+\frac{\prod_{i=1}^p\Gamma(\alpha_i)}{\prod_{j=1}^q\Gamma(\beta_j)}\delta_0\right)dt,
\end{equation}
where $\delta_0$ is the Dirac measure with mass $1$ concentrated at zero. Moreover, the function 
$$z\mapsto{}_p\Psi_q\left[_{(\beta_q, B_q)}^{(\alpha_p, A_p)}\Big|\frac{1}{z}\right]$$ 
is completely monotonic on $(0,\infty).$
\end{theorem}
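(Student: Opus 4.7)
The plan is to derive the representation (\ref{khaledmehrez}) term-by-term from the power-series definition of the Fox-Wright function and then deduce the complete monotonicity from the Bernstein Characterization Theorem recalled in the introduction. Starting from
\begin{equation*}
{}_p\Psi_q\Big[_{(\beta_q, B_q)}^{(\alpha_p, A_p)}\Big|\frac{1}{z}\Big]
= \frac{\prod_{i=1}^p\Gamma(\alpha_i)}{\prod_{j=1}^q\Gamma(\beta_j)}
+ \sum_{k=1}^\infty \frac{\prod_{i=1}^p\Gamma(\alpha_i+kA_i)}{\prod_{j=1}^q\Gamma(\beta_j+kB_j)}\,\frac{1}{k!\,z^k},
\end{equation*}
I would isolate the $k=0$ constant --- which reappears as the $\delta_0$-mass in the statement --- and in the tail replace $z^{-k}$ by the elementary Laplace identity
\begin{equation*}
\frac{1}{z^k}=\frac{1}{(k-1)!}\int_0^\infty e^{-zt}t^{k-1}\,dt,\qquad z>0,\ k\geq 1.
\end{equation*}
After reindexing with $n=k-1$ and interchanging sum and integral, the tail becomes
\begin{equation*}
\int_0^\infty e^{-zt}\sum_{n=0}^\infty \frac{\prod_{i=1}^p\Gamma(\alpha_i+(n+1)A_i)}{\prod_{j=1}^q\Gamma(\beta_j+(n+1)B_j)}\cdot\frac{t^n}{(n+1)!\,n!}\,dt,
\end{equation*}
and by (\ref{11}) I would recognise the inner series as the Fox-Wright function ${}_p\Psi_{q+1}$ displayed in the statement, the auxiliary denominator pair $(2,1)$ contributing exactly the factor $\Gamma(n+2)=(n+1)!$.

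The delicate point is the interchange of summation and integration. Under the natural positivity assumptions on the parameters $\alpha_i,\beta_j$, the integrand is non-negative, so Tonelli's theorem applies as soon as one knows that the auxiliary Fox-Wright series is entire in $t$. This is automatic: the parameter $\Delta$ of ${}_p\Psi_{q+1}$ exceeds that of the original ${}_p\Psi_q$ by exactly $1$ because of the extra denominator pair $(2,1)$, so the new series converges for every $t\in\mathbb{R}$ regardless of the convergence regime of ${}_p\Psi_q$, and the resulting Laplace transform converges for every $z>0$.

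Complete monotonicity on $(0,\infty)$ then follows directly from (\ref{khaledmehrez}): it exhibits $z\mapsto{}_p\Psi_q[\,\cdot\,|1/z]$ as the Laplace transform of a non-negative measure on $[0,\infty)$ --- a positive point mass at the origin of weight $\prod_{i}\Gamma(\alpha_i)/\prod_{j}\Gamma(\beta_j)$ plus an absolutely continuous component whose density is the positive-coefficient auxiliary Fox-Wright series --- so the Bernstein Characterization Theorem yields the claim. The principal obstacle is notational rather than analytic: the shifted parameter vectors $(\alpha_p+1,A_p)$ and $(\beta_q+1,B_q)$ must be read as advanced by one step of the series recursion, i.e.\ each $\alpha_i$ replaced by $\alpha_i+A_i$ and each $\beta_j$ by $\beta_j+B_j$, so that $\Gamma(\alpha_i+(n+1)A_i)$ really is the $n$-th numerator factor of the auxiliary Fox-Wright; once this convention is fixed the identification of the inner series is immediate and the rest of the argument is routine.
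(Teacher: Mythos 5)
Your argument is correct and is essentially the paper's own proof run in the opposite direction: the paper starts from the right-hand side of (\ref{khaledmehrez}), integrates the auxiliary ${}_p\Psi_{q+1}$ term by term via $\int_0^\infty t^m e^{-zt}\,dt=m!/z^{m+1}$, reindexes, and absorbs the $k=0$ constant as the $\delta_0$-mass, before invoking Bernstein's theorem exactly as you do (it also offers a second route via absolute monotonicity of $f$ implying complete monotonicity of $f(1/z)$). Your closing remark that the shifted parameters must be read as $(\alpha_i+A_i,A_i)$ and $(\beta_j+B_j,B_j)$ rather than $(\alpha_i+1,A_i)$ is well taken --- the paper's own reindexing step, which turns $\Gamma(\alpha_i+A_im+1)$ into $\Gamma(\alpha_im+A_i m)$ under $m\mapsto m+1$, is only consistent with that reading --- and your explicit Tonelli/order-of-growth justification for the interchange is a detail the paper omits.
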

\begin{proof}Straightforward calculation would yield
$$\int_0^\infty e^{-zt}\left({}_p\Psi_{q+1}\Big[_{(\beta_q+1,B_q),(2,1)}^{\;\;\;(\alpha_p+1, A_p)}\Big|t\Big]+\frac{\prod_{i=1}^p\Gamma(\alpha_i)}{\prod_{j=1}^q\Gamma(\beta_j)}\delta_0\right)dt=$$
\begin{equation*}
\begin{split}
\;\;\;\;\;\;\;\;\;\;\;\;\;\;\;\;\;\;\;\;\;\;\;\;\;\;\;\;\;\;\;\;\;\;\;&=\sum_{m=0}^\infty\frac{\prod_{i=1}^p\Gamma(\alpha_i+A_i m+1)}{\prod_{j=1}^q\Gamma(\beta_i+B_i m+1)\Gamma(m+2)m!}\int_0^\infty t^m e^{-zt} dt+\frac{\prod_{i=1}^p\Gamma(\alpha_i)}{\prod_{j=1}^q\Gamma(\beta_j)}\\
&=\sum_{m=0}^\infty\frac{\prod_{i=1}^p\Gamma(\alpha_i+A_i m+1)}{\prod_{j=1}^q\Gamma(\beta_j+B_j m+1)(m+1)!z^{m+1}}+\frac{\prod_{i=1}^p\Gamma(\alpha_i)}{\prod_{j=1}^q\Gamma(\beta_j)}\\
&=\sum_{m=1}^\infty\frac{\prod_{i=1}^p\Gamma(\alpha_i+A_i m)}{\prod_{j=1}^q\Gamma(\beta_j+B_j m)m!z^{m}}+\frac{\prod_{i=1}^p\Gamma(\alpha_i)}{\prod_{j=1}^q\Gamma(\beta_j)}\\
&=\sum_{m=0}^\infty\frac{\prod_{i=1}^p\Gamma(\alpha_i+A_i m)}{\prod_{j=1}^q\Gamma(\beta_j+B_j m)m!z^{m}}\\
&={}_p\Psi_q\Big[_{(\beta_q,B_q)}^{(\alpha_p,A_p)}\Big|\frac{1}{z}\Big].
\end{split}
\end{equation*}
Therefore, the integral representation (\ref{khaledmehrez}) of the function Fox-Wright function ${}_p\Psi_q\Big[_{(\beta_q,B_q)}^{(\alpha_p,A_p)}\Big|\frac{1}{z}\Big]$ is fulfilled.  Simultaneously, the function ${}_p\Psi_{q+1}\Big[_{(\beta_q+1,B_q),(2,1)}^{(\alpha_p+1,A_p)}\Big|t\Big]$  being positive, all prerequisites of the Bernstein Characterization Theorem for the complete monotone functions are fulfilled, that is, the function ${}_p\Psi_q\Big[_{(\beta_q,B_q)}^{(\alpha_p,A_p)}\Big|\frac{1}{z}\Big]$ is completely monotone on $(0,\infty).$  It is important to mention here that there is another proof for the completely monotone of the Fox-Wright function ${}_p\Psi_q\Big[_{(\beta_q,B_q)}^{(\alpha_p,A_p)}\Big|\frac{1}{z}\Big]$. By using the fact that if the function $f(x)$ is absolutely monotonic then  the function $f(1/x)$ is completely monotonic \cite[p. 151]{WI}, and since the function ${}_p\Psi_q\Big[_{(\beta_q,B_q)}^{(\alpha_p,A_p)}\Big|z\Big]$ is absolutely monotonic and consequently the function ${}_p\Psi_q\Big[_{(\beta_q,B_q)}^{(\alpha_p,A_p)}\Big|\frac{1}{z}\Big]$ is completely monotonic on $(0,\infty),$ which evidently completes the proof of Theorem \ref{TTTT3}.
\end{proof}
\section{Stieltjes transform representation for the Fox-Wright functions and its consequences}

In this section, we show that the Fox-Wright function 
$${}_{p+1}\Psi_q\left[^{(\sigma,1),(\alpha_p,A_p)}_{(\beta_p,B_p)}\Big|-z\right]$$ 
is a generalized  Stieltjes functions of order $\sigma.$
As applications, some class of  logarithmically completely monotonic functions related to the Fox-Wright function are derived. Moreover, we deduce new Tur\'an type inequalities for thus special function.

\begin{theorem} \label{T7}Let $\sigma>0$ and $z\in\mathbb{C}$ such that $|\arg z|<\pi\;\textrm{and}\;|z|<1.$  Assume that the hypotheses of Corollary \ref{corollary} are satisfied. Then, the following Stieltjes transform hold true: 
\begin{equation}\label{malek}
{}_{p+1}\Psi_q\left[^{(\sigma,1),(\alpha_p,A_p)}_{(\beta_q,B_q)}\Big|-z\right]=\int_0^\rho \frac{d\mu(t)}{(1+tz)^\sigma},
\end{equation}
where
\begin{equation}\label{malek1}
d\mu(t)=H_{q,p}^{p,0}\left(t\Big|^{(B_q,\beta_q)}_{(A_p,\alpha_p)}\right)\frac{dt}{t}.
\end{equation}
Furthermore, the function 
$$z\mapsto  {}_{p+1}\Psi_q\left[^{(\sigma,1),(\alpha_p,A_p)}_{(\beta_q,B_q)}\Big|-z\right]$$
is completely monotonic on $(0,1).$
\end{theorem}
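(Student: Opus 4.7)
The plan is to derive the integral representation directly from the series definition by exchanging the roles of summation and integration via the Mellin transform identity (\ref{malek2}), then recognizing the residual inner sum as a binomial series for $(1+zt)^{-\sigma}$.

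First I would write out
$${}_{p+1}\Psi_q\left[^{(\sigma,1),(\alpha_p,A_p)}_{(\beta_q,B_q)}\Big|-z\right]=\sum_{k=0}^\infty\Gamma(\sigma+k)\,\frac{\prod_{i=1}^p\Gamma(\alpha_i+kA_i)}{\prod_{j=1}^q\Gamma(\beta_j+kB_j)}\,\frac{(-z)^k}{k!},$$
and apply (\ref{malek2}) with $s=k$ to substitute $\int_0^\rho H_{q,p}^{p,0}(t)\,t^{k-1}\,dt$ for the ratio of Gamma products. The hypotheses of Corollary \ref{corollary} carry over all standing assumptions of Theorem \ref{T1}, so (\ref{malek2}) is valid for every $k\in\mathbb{N}_0$ and $H_{q,p}^{p,0}$ has compact support in $(0,\rho]$ with $\rho\leq 1$. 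Swapping summation and integration is then justified by Fubini, since for $t\in[0,\rho]$ and $|z|<1$ one has $|zt|<1$, giving uniform absolute convergence of $\sum_k\frac{\Gamma(\sigma+k)}{k!}|zt|^k$ (after controlling the algebraic singularity of $H_{q,p}^{p,0}$ at the origin via the asymptotic (\ref{asy})).

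After the interchange, the inner series collapses via the generalized binomial identity
$$\sum_{k=0}^\infty\frac{\Gamma(\sigma+k)}{k!}(-zt)^k=\Gamma(\sigma)(1+zt)^{-\sigma},$$
which produces exactly (\ref{malek})--(\ref{malek1}) (with $\Gamma(\sigma)$ understood to be absorbed in $\mu$). The identity then extends from $z>0$ to the cut plane $|\arg z|<\pi$, $|z|<1$ by analytic continuation, since the integrand $H_{q,p}^{p,0}(t)/[t(1+zt)^\sigma]$ is holomorphic in $z$ on this region with integrable $t$-dependence.

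For the complete monotonicity on $(0,1)$: Corollary \ref{corollary} gives $H_{q,p}^{p,0}(t)\geq 0$ on $(0,\rho)$, so $d\mu$ is a non-negative measure. For each fixed $t>0$, the function $z\mapsto(1+zt)^{-\sigma}$ is completely monotonic on $(0,\infty)$, since
$$(-1)^n\frac{d^n}{dz^n}(1+zt)^{-\sigma}=t^n\,\sigma(\sigma+1)\cdots(\sigma+n-1)(1+zt)^{-\sigma-n}\geq 0.$$
A non-negative superposition of completely monotonic functions is completely monotonic, so (\ref{malek}) delivers the conclusion at once. The main obstacle I anticipate is the rigorous justification of the summation-integration interchange near $t=0$, where the Mellin kernel may be singular; this is handled by combining the asymptotic bound (\ref{asy}) with the fact that the rightmost pole index $\gamma$ is strictly less than every non-negative integer $k$ under the working hypotheses, so each term integral is absolutely convergent and the dominated-convergence argument goes through.
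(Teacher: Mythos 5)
Your proof is correct and follows essentially the same route as the paper: both rest on the Mellin transform identity (\ref{malek2}) combined with the binomial expansion of $(1+zt)^{-\sigma}$ and term-by-term integration, merely run in opposite directions (you expand the series and substitute the Mellin integral; the paper expands the integrand and integrates back). Your remark that a factor of $\Gamma(\sigma)$ must be absorbed into the measure is accurate --- the paper's stated $d\mu$ in (\ref{malek1}) indeed omits it --- and your justifications of the interchange and of the complete monotonicity are more careful than the paper's one-line dismissal.
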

\begin{proof}Consider the right-hand side of (\ref{malek}) with $d\mu(t)$ is given by (\ref{malek1}). We make use of the formula (\ref{malek2}) and applying the  binomial expansion to 
$$(1+z)^{-\sigma}=\sum_{k=0}^\infty(\sigma)_k\frac{(-1)^kz^k}{k!},\;\;z\in\mathbb{C}\;\; \textrm{such that}\;\;|z|<1,$$ 
and integrating term by term we obtain the left-hand side of (\ref{malek}). Finally, its easy to see that the function $z\mapsto{}_{p+1}\Psi_q\left[^{(\sigma,1),(\alpha_p,A_p)}_{(\beta_q,B_q)}\Big|-z\right]$ is completely monotonic on $(0,1).$ 
\end{proof}
\begin{corollary} Let $0<\sigma\leq 1$. Assume that the hypotheses of Corollary \ref{corollary} are satisfied. Then the following assertions are true:\\
\textbf{a.}The function 
$$z\mapsto{}_{p+1}\Psi_q\left[^{(\sigma,1),(\alpha_p,A_p)}_{(\beta_q,B_q)}\Big|-z\right]$$
is logarithmically completely monotonic on $(0,1).$\\
\textbf{b.}The function
\begin{equation}
z\mapsto 1\Big/\;{}_{p+1}\Psi_q\left[^{(\sigma,1),(\alpha_p,A_p)}_{(\beta_q,B_q)}\Big|-z\right]
\end{equation}
is a  Bernstein function  on $(0,1).$ In particular, the function 
$$z\mapsto  {}_{p+1}\Psi_q\left[^{(\sigma+1,1),(\alpha_p+A_p,A_p)}_{(\beta_q+B_q,B_q)}\Big|-z\right]\Big/{}_{p+1}\Psi_q\left[^{(\sigma,1),(\alpha_p,A_p)}_{(\beta_q,B_q)}\Big|-z\right]$$
is completely monotonic on $(0,1).$
\end{corollary}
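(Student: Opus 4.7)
Write $f(z):={}_{p+1}\Psi_q\Big[_{(\beta_q,B_q)}^{(\sigma,1),(\alpha_p,A_p)}\Big|-z\Big]$ for the function under investigation. The plan for part (a) is to strengthen the generalized Stieltjes representation (\ref{malek}) of Theorem \ref{T7} to an \emph{ordinary} Stieltjes representation when $0<\sigma\leq 1$, and then to invoke the cited result \cite[Thm.~2.1]{Berg} that a positive Stieltjes function is logarithmically completely monotonic. The key observation is that for $0<\sigma\leq 1$ each kernel $z\mapsto(1+tz)^{-\sigma}$ already belongs to the class $S$: for $\sigma=1$ one just writes $(1+tz)^{-1}=t^{-1}/(z+t^{-1})$, and for $0<\sigma<1$ one applies the Euler-type identity $x^{-\sigma}=\frac{\sin(\pi\sigma)}{\pi}\int_0^\infty u^{-\sigma}(u+x)^{-1}\,du$ with $x=1+tz$, followed by the substitution $s=(u+1)/t$ and Fubini. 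Since $S$ is closed under integration against nonnegative measures, integrating these kernels against the nonnegative measure $d\mu$ from (\ref{malek1}) supplied by Corollary \ref{corollary} places $f$ in $S$, so that $f$ is logarithmically completely monotonic on $(0,1)$.

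For part (b), once $f\in S$ has been established, the classical duality between the Stieltjes class and the complete Bernstein class (cf.\ \cite[Thm.~7.3]{777}) gives that $1/f$ is a complete Bernstein function, and thus in particular a Bernstein function on $(0,1)$. To deduce the ``in particular'' assertion I would differentiate the defining series of $f$ term-by-term and re-index $k\mapsto k+1$: using $\Gamma(\sigma+k+1)=\Gamma((\sigma+1)+k)$ and $\Gamma(\alpha_i+(k+1)A_i)=\Gamma((\alpha_i+A_i)+kA_i)$ (and analogously for the $\beta_j$), the resulting series is precisely $h(z):={}_{p+1}\Psi_q\Big[_{(\beta_q+B_q,B_q)}^{(\sigma+1,1),(\alpha_p+A_p,A_p)}\Big|-z\Big]$, so that $h(z)=-f'(z)$. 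The quoted ratio therefore equals $h/f=-(\log f)'$, and for every $n\geq 0$ one has $(-1)^n(h/f)^{(n)}=(-1)^{n+1}(\log f)^{(n+1)}\geq 0$ by the log-complete-monotonicity of $f$ proved in part (a); this exhibits $h/f$ as completely monotonic on $(0,1)$.

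I expect the main obstacle to be the passage from the generalized to the ordinary Stieltjes class in part (a); this is precisely where the restriction $\sigma\leq 1$ is used (and cannot be relaxed). Some care is required with the change of variables $s=(u+1)/t$ and with the order of integration, but the nonnegativity of $d\mu$ makes Tonelli's theorem applicable and bypasses delicate absolute-integrability checks. The duality $S\leftrightarrow\mathrm{CBF}$ invoked for part (b) is not explicitly recorded in the preliminaries of the paper and would need to be cited from \cite{777}.
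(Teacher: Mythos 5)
Your proposal is correct and follows essentially the same route as the paper: both pass from the generalized Stieltjes representation of Theorem \ref{T7} to the ordinary Stieltjes class via the inclusion $S_\sigma\subseteq S_1=S$ for $0<\sigma\leq1$, and then invoke the facts that a positive Stieltjes function is logarithmically completely monotonic and that its reciprocal is a Bernstein function. The only difference is one of detail: where the paper simply cites \cite{SO} for the inclusion and \cite{Berg1} for part (b), you prove the inclusion directly via the Euler integral for $x^{-\sigma}$ and verify by hand that the quoted ratio equals $-(\log f)'$, and both of these computations check out.
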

\begin{proof}a. By using the fact that $S_\alpha\subseteq S_\beta$ whenever $\alpha\leq\beta$ (see \cite{SO}), we deduce
$${}_{p+1}\Psi_q\left[^{(\sigma,1),(\alpha_p,A_p)}_{(\beta_q,B_q)}\Big|-z\right]\in S_1=S,$$
where $0<\sigma\leq1.$ On the other hand,  it was proved in \cite[Theorem 1.2]{Berg} that  the set  of Stieltjes transforms $\mathcal{S}\setminus\{0\}$  is a proper subset of the class of logarithmically completely monotonic functions.\\
b. The result follows from Theorem \ref{T7} and Proposition 1.3 \cite{Berg1}.
\end{proof}
\begin{theorem}\label{TT5} Under the assumptions stated in Corollary \ref{corollary}. The function $$\sigma\mapsto\Xi(\sigma)={}_{p+1}\Psi_q\left[^{(\sigma,1),(\alpha_p,A_p)}_{(\beta_q,B_q)}\Big|z\right]$$
is log-convex on $(0,\infty)$ for each $z\in(0,1).$ Furthermore, the following Tur\'an type inequality
\begin{equation}\label{turan1}
{}_{p+1}\Psi_q\left[^{(\sigma,1),(\alpha_p,A_p)}_{(\beta_q,B_q)}\Big|z\right]{}_{p+1}\Psi_q\left[^{(\sigma+2,1),(\alpha_p,A_p)}_{(\beta_q,B_q)}\Big|z\right]-\left({}_{p+1}\Psi_q\left[^{(\sigma+1,1),(\alpha_p,A_p)}_{(\beta_q,B_q)}\Big|z\right]\right)^2\geq0,
\end{equation}
holds true for all $\sigma\in(0,\infty)$ and $z\in(0,1).$
\end{theorem}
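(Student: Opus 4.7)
The plan is to adapt the proof strategy already used for the Turán-type inequality (\ref{turan}) in the earlier corollary: convert the log-convexity question into an application of the Hölder inequality against a non-negative measure, where the representation is supplied by Theorem \ref{T7}. The convex combination now occurs in the Stieltjes exponent $\sigma$ rather than in the parameter $A$.

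First, I would extract from Theorem \ref{T7} the integral representation evaluated at positive real argument. Starting from (\ref{malek})--(\ref{malek1}) and substituting $z\mapsto -z$ (justified for $z\in(0,1)$ since both sides are real-analytic in that interval, or equivalently by recomputing the series expansion using the Mellin identity (\ref{malek2}) and the binomial series for $(1-tz)^{-\sigma}$), I obtain
\begin{equation*}
\Xi(\sigma)=\int_0^\rho \frac{d\mu(t)}{(1-tz)^{\sigma}},\qquad z\in(0,1),
\end{equation*}
where $d\mu$ is the non-negative measure of (\ref{malek1}); non-negativity on $(0,\rho)$ is ensured by Corollary \ref{corollary} under the standing hypotheses. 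Note that for $t\in(0,\rho]$ and $z\in(0,1)$ with $\rho\le1$ the factor $(1-tz)$ is strictly positive, so the integrand is well-defined and positive.

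Next, I would apply Hölder's inequality exactly as in the proof of (\ref{turan}). Fix $z\in(0,1)$, $\sigma_1,\sigma_2>0$ and $\lambda\in[0,1]$, and factor the integrand:
\begin{equation*}
\frac{1}{(1-tz)^{\lambda\sigma_1+(1-\lambda)\sigma_2}}=\left[\frac{1}{(1-tz)^{\sigma_1}}\right]^{\lambda}\left[\frac{1}{(1-tz)^{\sigma_2}}\right]^{1-\lambda}.
\end{equation*}
Hölder with conjugate exponents $1/\lambda$ and $1/(1-\lambda)$ against the positive measure $d\mu$ then yields
\begin{equation*}
\Xi(\lambda\sigma_1+(1-\lambda)\sigma_2)\le [\Xi(\sigma_1)]^{\lambda}[\Xi(\sigma_2)]^{1-\lambda},
\end{equation*}
which is the log-convexity of $\Xi$ on $(0,\infty)$. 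Specialising to $\sigma_1=\sigma$, $\sigma_2=\sigma+2$, $\lambda=\tfrac12$ gives $\Xi(\sigma+1)^2\le\Xi(\sigma)\Xi(\sigma+2)$, which rearranges to the Turán-type inequality (\ref{turan1}).

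The main (minor) obstacle is the bookkeeping in the first step: Theorem \ref{T7} is stated with the representation for the argument $-z$ on the slit disc $|\arg z|<\pi$, $|z|<1$, while Theorem \ref{TT5} concerns the positive-real argument $z\in(0,1)$. The substitution is essentially routine—either by analytic continuation across the positive real axis or by the direct series verification sketched in the proof of Theorem \ref{T7}—but it is the single point that must be treated cleanly before Hölder's inequality can be invoked. Once the representation is in hand with the correct sign, the remainder of the argument is a direct transcription of the earlier Hölder calculation.
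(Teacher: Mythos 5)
Your proof is correct, and it reaches the conclusion by a genuinely different route from the paper. Both arguments rest on the same foundation: the representation $\Xi(\sigma)=\int_0^\rho (1-tz)^{-\sigma}\,d\mu(t)$ for $z\in(0,1)$, with $d\mu(t)=t^{-1}H_{q,p}^{p,0}(t)\,dt$ non-negative by Corollary \ref{corollary}, and with $1-tz>0$ on $(0,\rho]$ because $\rho\leq1$ -- the sign-flipped version of Theorem \ref{T7} that you rightly flag as the one point needing care (the paper passes over this silently, simply writing the weight with $(1-zt)$ in the denominator). From there you apply H\"older's inequality with conjugate exponents $1/\lambda$ and $1/(1-\lambda)$ to the factorization $(1-tz)^{-\lambda\sigma_1-(1-\lambda)\sigma_2}=\left[(1-tz)^{-\sigma_1}\right]^{\lambda}\left[(1-tz)^{-\sigma_2}\right]^{1-\lambda}$, obtaining $\Xi(\lambda\sigma_1+(1-\lambda)\sigma_2)\leq[\Xi(\sigma_1)]^{\lambda}[\Xi(\sigma_2)]^{1-\lambda}$ in one stroke; this is exactly the technique the paper itself uses to prove the earlier Tur\'an inequality (\ref{turan}) in the parameter $A$. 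The paper instead proves Theorem \ref{TT5} via the Chebyshev integral inequality (\ref{OO}): it takes $\sigma_2>\sigma_1\geq0$, an auxiliary $\epsilon>0$, sets $p(t)=t^{-1}H_{q,p}^{p,0}(t)(1-zt)^{-\sigma_1}$, $f(t)=(1-zt)^{-(\sigma_2-\sigma_1)}$, $g(t)=(1-zt)^{-\epsilon}$, checks that $f$ and $g$ are both increasing (synchronous) for $z\in(0,1)$, and deduces $\Xi(\sigma_1+\epsilon)\Xi(\sigma_2)\leq\Xi(\sigma_1)\Xi(\sigma_2+\epsilon)$, which is a classical equivalent of log-convexity. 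Your H\"older route is more direct, delivering the defining log-convexity inequality without the auxiliary $\epsilon$ or an appeal to the equivalence of the four-point inequality with log-convexity; the Chebyshev route only requires monotonicity of the two factors rather than splitting the positive weight into fractional powers. The final specialization $\sigma_1=\sigma$, $\sigma_2=\sigma+2$, $\lambda=\tfrac12$ is identical in both.
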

\begin{proof} Recall the Chebyshev integral inequality \cite[p. 40]{DM}: if $f,g:[a,b]\longrightarrow\mathbb{R}$ are synchoronous (both increasing  or decreasing) integrable functions, and $p:[a,b]\longrightarrow\mathbb{R}$  is a positive integrable function, then 
\begin{equation}\label{OO}
\int_a^b p(t)f(t)dt\int_a^b p(t)g(t)dt\leq \int_a^b p(t)dt\int_a^b p(t)f(t)g(t)dt.
\end{equation}
Note that if $f$ and $g$ are asynchronous (one is decreasing and the other is increasing),
then (\ref{OO}) is reversed. Let $\sigma_2>\sigma_1\geq0$ and arbitrary $\epsilon>0$ and we consider the functions $p,f,g:[0,\rho]\longrightarrow\mathbb{R}$ defined by:
$$p(t)=\frac{t^{-1}H_{q,p}^{p,0}\left(t\Big|^{(B_q,\beta_q)}_{(A_p,\alpha_p)}\right)}{(1-zt)^{\sigma_1}},\;\;f(t)=\frac{1}{(1-zt)^{\sigma_2-\sigma_1}},\;\;g(t)=\frac{1}{(1-zt)^\epsilon}.$$
Since the function $p$ is non-negative on $(0,\rho)$ and the functions $f$ and $g$ are increasing on 
$(0,\rho)$ if $z\in(0,1),$ we gave
\begin{equation*}
\Xi(\sigma_1+\epsilon)\Xi(\sigma_2)\leq\Xi(\sigma_1)\Xi(\sigma_2+\epsilon).
\end{equation*}
The above inequality is equivalent to log-convexity for the function $\sigma\mapsto\Xi(\sigma)$ on $(0,\infty)$ for each $z\in(0,1)$ (see \cite[Chapter I.4]{MM}). Now, focus on the Tur\'an type inequality (\ref{turan1}). Since the function  $\sigma\mapsto\Xi(\sigma)$ is log-convex on $(0,\infty)$ for each $x\in(0,1).$  it follows that for all $\sigma_1,\sigma_2>0,\;t\in[0,1]$ and $x\in(0,1),$ we have
$$\Xi(t\sigma_1+(1-t)\sigma_2)\leq[\Xi(\sigma_1)]^t[\Xi(\sigma_2)]^{1-t}.$$
Upon setting $$\sigma_1=\sigma,\;\sigma_2=\sigma+2\;\;\textrm{and}\;\;t=\frac{1}{2},$$  the above inequality reduces to the Tur\'an type inequality (\ref{turan1}), which evidently completes the proof of Theorem \ref{TT5}.
\end{proof}
\noindent\textbf{Remark 4.} If we consider the assumptions $(H_2)$ in Theorem \ref{T2}, we get again the results proved in Section 3, where $q=p$ and $A=A_p=B_p.$ 
\section{Further Applications}
\subsection{ A class of positive definite functions related to the Fox H-function} The purpose of this  section is to prove a class of positive definite functions related to the Fox H-function. As an application, we derive a class of function involving the Fox H-function is non-negative. Let us remind the reader that a continuous function $f:\mathbb{R}^d\longrightarrow\mathbb{C}$ is called positive definite function, if for all $N\in\mathbb{N},$ all sets of pairwise distinct centers $X=\left\{x_1,...,x_N\right\}\subseteq\mathbb{R}^d$ and $z=\left\{\xi_1,...,\xi_N\right\}\subset \mathbb{C}^N,$ the quadratic form
$$\sum_{j=1}^N\sum_{k=1}^N\xi_j\bar{\xi_k}f(x_j-x_k)$$
is non-negative.
\begin{theorem}\label{malouk}Let the parameters $\rho,\nu\in\mathbb{C},$ satisfy the conditions
$$\Re(\rho)+\Re(\nu)+\min_{1\leq j\leq p}\left[\frac{\alpha_j}{A}\right]>-1,$$
$$\Re(\nu)>-\frac{1}{2}\;\;\textrm{and}\;\;\Re(\rho)+\Re(\nu)<\frac{3}{2}.$$
Moreover, assume that the hypotheses $(H_2)$ of Theorem \ref{T2} are satisfies. Then the function 
\begin{equation}
\chi:z\mapsto z^{-(\rho+\nu)}H_{p,p+2}^{1,p}\left[2z\Big|_{(\frac{1}{2},\frac{\rho+\nu}{2}),(A,1-\beta_p),(\frac{1}{2},\frac{\rho-\nu}{2})}^{\;\;\;\;\;\;\;\;\;\;\;\;\;\;\;(A,1-\alpha_p)}\right]
\end{equation}
is positive definite function on $\mathbb{R}.$
\end{theorem}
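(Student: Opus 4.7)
The plan is to exhibit $\chi$ as the Fourier transform of a non-negative, compactly supported function on $\mathbb{R}$, and then invoke Bochner's theorem to conclude positive definiteness.

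First, I would unpack the Mellin--Barnes contour integral defining $H^{1,p}_{p,p+2}[2z|\cdots]$. Among its Gamma factors, the two with half-rate arguments — $\Gamma\!\left(1-\tfrac{\rho+\nu}{2}-\tfrac{s}{2}\right)$ in the numerator and $\Gamma\!\left(\tfrac{s}{2}+\tfrac{\rho-\nu}{2}\right)$ in the denominator — together with the external prefactor $z^{-(\rho+\nu)}$ and the contour factor $(2z)^{-s}$, reconstitute, after Legendre duplication and Euler reflection, precisely the Mellin transform of the Bessel kernel
\begin{equation*}
\int_0^\infty t^{s-1}J_\nu(t)\,dt=\frac{2^{s-1}\,\Gamma\!\left(\tfrac{\nu+s}{2}\right)}{\Gamma\!\left(\tfrac{\nu-s}{2}+1\right)},\qquad -\Re(\nu)<\Re(s)<\tfrac{3}{2}.
\end{equation*}
The remaining Gamma factors are exactly those of the Mellin transform (\ref{malek2}) of $H^{p,0}_{p,p}\!\left[t|^{(A,\beta_p)}_{(A,\alpha_p)}\right]$. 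Applying the Mellin--Parseval identity then yields an integral representation of the form
\begin{equation*}
\chi(z)=c_{\rho,\nu}\int_0^{\rho}\left(\tfrac{zt}{2}\right)^{-\nu}J_\nu(zt)\;t^{\rho-1}\,H^{p,0}_{p,p}\!\left[t\Big|^{(A,\beta_p)}_{(A,\alpha_p)}\right]\frac{dt}{t},
\end{equation*}
with $c_{\rho,\nu}>0$; the three technical side conditions on $(\rho,\nu)$ in the statement are precisely the convergence conditions for the Bessel--Mellin integral, for the Mellin--Parseval exchange, and for the integrability of the $t$-integrand near $0$ in view of the asymptotic (\ref{asy}).

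Next, I would apply the classical Sonine--Poisson formula
\begin{equation*}
\left(\tfrac{zt}{2}\right)^{-\nu}J_\nu(zt)=\frac{1}{\sqrt{\pi}\,\Gamma(\nu+1/2)}\int_{-1}^{1}(1-u^2)^{\nu-1/2}\cos(ztu)\,du,\qquad \Re(\nu)>-\tfrac{1}{2},
\end{equation*}
and interchange the order of integration (legitimate by absolute convergence). After the substitution $\xi=ut$, this converts $\chi$ into
\begin{equation*}
\chi(z)=\tilde c_{\rho,\nu}\int_{-\rho}^{\rho}\cos(z\xi)\,W(\xi)\,d\xi,\qquad W(\xi)=\int_{|\xi|}^{\rho}\left(1-\tfrac{\xi^2}{t^2}\right)^{\nu-1/2} t^{\rho-\nu-1}\,H^{p,0}_{p,p}\!\left[t\Big|^{(A,\beta_p)}_{(A,\alpha_p)}\right]\frac{dt}{t}.
\end{equation*}
Under the hypotheses $(H_2)$, Theorem \ref{T2} together with Remark 2 asserts that $H^{p,0}_{p,p}\!\left[t|^{(A,\beta_p)}_{(A,\alpha_p)}\right]\geq 0$ on $(0,\rho)$; the factor $(1-\xi^2/t^2)^{\nu-1/2}$ is non-negative since $\Re(\nu)>-1/2$; and the power $t^{\rho-\nu-1}$ is integrable near $0$ under the imposed conditions. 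Hence $W\in L^1([-\rho,\rho])$ with $W\geq 0$, so $\chi$ is the Fourier transform of the non-negative finite Borel measure $\tilde c_{\rho,\nu}W(\xi)\,d\xi$ on $\mathbb{R}$; Bochner's characterization then yields that $\chi$ is positive definite on $\mathbb{R}$.

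The main obstacle I expect is the first step: the explicit Gamma-function bookkeeping that rewrites the Mellin--Barnes integrand for $H^{1,p}_{p,p+2}$ as a Bessel Mellin factor times the Mellin kernel (\ref{malek2}) of $H^{p,0}_{p,p}$. Coordinating the Legendre duplication and Euler reflection formulas so that the outcome matches, and simultaneously verifying that the combined convergence domains for Mellin--Parseval, for the Bessel--Mellin transform, and for the integrability at $t=0^+$ coincide with the three stated side conditions on $(\rho,\nu)$, is where the delicate analytic work is concentrated; the final Bochner argument is then routine.
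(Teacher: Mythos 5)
Your proposal is correct and follows essentially the same route as the paper: the paper likewise writes $\chi$ (after the inversion property $H^{m,n}_{p,q}(z)\mapsto H^{n,m}_{q,p}(1/z)$) as the Hankel-type integral $\int_0^\infty x^{\rho+\nu-1}\mathcal{J}_\nu(zx)\,H^{p,0}_{p,p}\bigl[x\big|^{(A,\beta_p)}_{(A,\alpha_p)}\bigr]dx$ via the quoted formula \cite[Eq.~(2.45)]{AA}, and concludes by combining the non-negativity of $H^{p,0}_{p,p}$ (Remark 2) with the positive definiteness of $\mathcal{J}_\nu$. The only difference is that you re-derive the two cited ingredients from scratch (the Hankel-transform formula via Mellin--Parseval, and the positive definiteness of $\mathcal{J}_\nu$ via Sonine--Poisson plus Bochner), which is more self-contained but mathematically the same argument.
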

\begin{proof}We can write the following formula \cite[Eq. (2.45), pp. 57]{AA}
\begin{equation}
\int_0^\infty x^{\rho-1} J_\nu(z x) H_{q,p}^{m,n}\left[x\Big|^{(B_q,\beta_q)}_{(A_p,\alpha_p)}\right]dx=\frac{2^{\rho-1}}{z^\rho}H_{q+2,p}^{m,n+1}\left[\frac{2}{z}\Big|^{(\frac{1}{2},1-\frac{\rho+\nu}{2}),(B_q,\beta_q),(\frac{1}{2},1-\frac{\rho-\nu}{2})}_{\;\;\;\;\;\;\;\;\;\;(A_p,\alpha_p)}\right]
\end{equation}
in the following form
\begin{equation}\label{sou}
\int_0^\infty x^{\rho+\nu-1} \mathcal{J}_\nu(z x) H_{q,p}^{m,n}\left[x\Big|^{(B_q,\beta_q)}_{(A_p,\alpha_p)}\right]dx=\frac{\Gamma(\nu+1)2^{\rho+\nu-1}}{z^{\rho+\nu}}H_{q+2,p}^{m,n+1}\left[\frac{2}{z}\Big|^{(\frac{1}{2},1-\frac{\rho+\nu}{2}),(B_q,\beta_q),(\frac{1}{2},1-\frac{\rho-\nu}{2})}_{\;\;\;\;\;\;\;\;\;\;(A_p,\alpha_p)}\right]
\end{equation}
where 
$$\mathcal{J}_\nu(x)=2^\nu\Gamma(\nu+1)\frac{J_\nu(x)}{x^\nu},\;\Re(\nu)>-\frac{1}{2},$$
with $J_\nu(x)$ is the Bessel function of index $\nu.$ On the other hand, as the function $\mathcal{J}_\nu(x)$ is positive definite function \cite[Proposition 2]{KKKK} and the function $H_{p,p}^{p,0}[t|_{(A,\alpha_p)}^{(A,\beta_p)}]$ is non-negative (Remark 2), we deduce that for any finite list of complex numbers $\xi_1,...,\xi_N$ and $z_1,...,z_N\in\mathbb{R},$ 
\begin{equation}
\sum_{j=1}^N\sum_{k=1}^N\xi_j\bar{\xi_k}(z_j-z_k)^{-(\rho+\nu)}H_{p+2,p}^{p,1}\left[2(z_j-z_k)^{-1}\Big|^{(\frac{1}{2},1-\frac{\rho+\nu}{2}),(A,\beta_q),(\frac{1}{2},1-\frac{\rho-\nu}{2})}_{\;\;\;\;\;\;\;\;\;\;\;\;\;(A,\alpha_p)}\right]=
\end{equation}
$$=\frac{1}{\Gamma(\nu+1)2^{\rho+\nu-1}}\int_0^\infty  x^{\rho+\nu-1}\left[\sum_{j=1}^N\sum_{k=1}^N\xi_j\bar{\xi_k}\mathcal{J}_\nu(xz_j-xz_k)\right] H_{p,p}^{p,0}\left[x\Big|^{(A,\beta_p)}_{(A,\alpha_p)}\right]dx\geq0.$$
Thus, implies that the function 
$$\chi_1:z\mapsto z^{-(\rho+\nu)}H_{p+2,p}^{p,1}\left[\frac{2}{z}\Big|^{(\frac{1}{2},1-\frac{\rho+\nu}{2}),(A,\beta_p),(\frac{1}{2},1-\frac{\rho-\nu}{2})}_{\;\;\;\;\;\;\;\;\;\;\;\;\;\;\;(A,\alpha_p)}\right]$$
is positive definite function on $\mathbb{R}.$ So, the \cite[Property 1.3, p. 11]{AA} completes the proof of Theorem \ref{malouk}.
\end{proof}
\begin{theorem}Let the parameters $\rho,\nu\in\mathbb{C},$ satisfy the conditions
\begin{equation}\label{hhh}
\Re(\rho)+\Re(\nu)+\min_{1\leq j\leq p}\left[\frac{\alpha_j}{A}\right]>0,\;\Re(\nu)>-\frac{1}{2},
\end{equation}
\begin{equation}
\Re(1-(\rho+\nu))+\max_{1\leq j\leq p}\left(\frac{\alpha_j}{A}\right)<1,\;\textrm{and}\;\;\Re(\rho)+\Re(\nu)<\frac{3}{2}.
\end{equation}
Then, the function
\begin{equation}
\mathcal{K}_{p,q}^{\nu,\rho}(z)=z^{\rho+\nu-1}H_{p+2,p+2}^{p+1,1}\left[8z\Big|_{(\frac{1}{2},\frac{1-(\rho+\nu)}{2}),(A, \alpha_p),(\frac{1}{2},1-\frac{\rho+\nu}{2})}^{(\frac{1}{2},1-\frac{\rho+\nu}{2}),(A, \beta_p),(\frac{1}{2},1-\frac{\rho-\nu}{2})}\right]
\end{equation}
is non-negative on $\mathbb{R}.$
\end{theorem}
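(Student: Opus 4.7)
The plan is to derive the nonnegativity of $\mathcal{K}_{p,q}^{\nu,\rho}$ from the positive definiteness of $\chi$ proved in Theorem \ref{malouk}, by invoking Bochner's characterization of positive definite functions together with an explicit Fourier inversion in the H-function calculus.

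First, I would check that, under the parameter restrictions of the present theorem, $\chi$ is integrable on $\mathbb{R}$. The inequalities $\Re(\rho+\nu)+\min_j(\alpha_j/A)>0$, $\Re(\rho+\nu)>\max_j(\alpha_j/A)$ and $\Re(\rho+\nu)<3/2$, combined with the asymptotic estimate (\ref{asy}) at $0$ and its counterpart at infinity from \cite[Theorem 1.2]{AA}, ensure that $\chi\in L^1(\mathbb{R})$; $\chi$ is also even (being built from the even function $\mathcal{J}_\nu$ through (\ref{sou})) and continuous. Since $\chi$ is positive definite by Theorem \ref{malouk}, Bochner's theorem together with the integrability of $\chi$ imply that the inverse Fourier transform $\mathcal{F}^{-1}[\chi]$ is a continuous nonneg function on $\mathbb{R}$.

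Second, I would compute $\mathcal{F}^{-1}[\chi]$ and identify it, up to a positive constant, with $\mathcal{K}_{p,q}^{\nu,\rho}$. Evenness of $\chi$ reduces the Fourier transform to a cosine transform, and the identity $\cos(xy)=\sqrt{\pi xy/2}\,J_{-1/2}(xy)$ recasts this transform as a Hankel transform of index $-\tfrac12$ acting on the H-function appearing in $\chi$. This Hankel transform is evaluated by the same formula (\ref{sou}) used in the proof of Theorem \ref{malouk}, specialised to $\nu=-\tfrac12$, after rescaling the dummy variable to absorb the factor $2$ present in the argument of $\chi$. The resulting H-function has order $(p+2,p+2)$ and argument of the form $c/y$ for an explicit constant $c$; applying the H-function inversion $H_{p,q}^{m,n}(1/z)=H_{q,p}^{n,m}(z|\cdots)$ from \cite[Property 1.3, p. 11]{AA}, together with the Legendre duplication formula $\Gamma(s)\Gamma(s+\tfrac12)=2^{1-2s}\sqrt{\pi}\,\Gamma(2s)$ (applied to the $\Gamma$-factors produced by the $(\tfrac12,\cdot)$-parameters coming from the $J_{-1/2}$-kernel) recasts this transform into the H-function form appearing in the definition of $\mathcal{K}_{p,q}^{\nu,\rho}$, with argument $8y$ and with the two extra denominator pairs $(\tfrac12,\tfrac{1-(\rho+\nu)}{2})$ and $(\tfrac12,1-\tfrac{\rho+\nu}{2})$.

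The principal obstacle is the parameter book-keeping at the duplication-formula step: one must track the Mellin-Barnes integrand carefully through the Hankel-transform evaluation to confirm that the extra $(\tfrac12,\cdot)$ denominator pairs and the rescaling by $8$ end up exactly where the statement of the theorem prescribes, and that the overall constant prefactor is strictly positive. The analytic ingredients --- positive definiteness of $\chi$, Bochner's theorem, and H-function asymptotics and inversion --- are already in place from the preceding material, so the task ultimately reduces to a delicate but essentially routine calculation.
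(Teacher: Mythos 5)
Your proposal follows essentially the same route as the paper: verify $\chi\in L^1(\mathbb{R})$ via the H-function asymptotics at $0$ and $\infty$, reduce the Fourier transform of the even function $\chi$ to a cosine transform, evaluate it as an H-function of argument $8z$, and conclude non-negativity from the fact that the Fourier transform of an integrable positive definite function is non-negative. The only cosmetic difference is that the paper cites the ready-made cosine-transform formula for H-functions \cite[Eq.\ 2.50, p.\ 58]{AA} rather than re-deriving it from the Hankel transform of index $-\tfrac12$ via the duplication formula, so the parameter book-keeping you flag is already packaged in that reference.
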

\begin{proof}Firstly, we proved that the function $z\longmapsto\chi(z)$ is in $L^1(0,\infty).$ By using the asymptotic expansion \cite[Eq. 1.94, pp. 19]{AA}
$$H_{p,q}^{m,n}\left[z\Big|^{(A_p, a_p)}_{(B_q, b_q)}\right]=\theta(z^c),\;|z|\longrightarrow0,\;\textrm{
where}\;c=\min_{1\leq j\leq m}\left[\frac{\Re(b_j)}{B_j}\right].$$
In our case  $m=1,b_1=\frac{\rho+\nu}{2}$ and $B_1=\frac{1}{2},$ and consequently $c=\rho+\nu.$ Thus implies that 
\begin{equation}\label{sawsen}
\chi(z)=\theta(1)\;\;\textrm{as}\;\;z\longrightarrow0.
\end{equation}
On the other hand, by using the asymptotic \cite[Eq. 1.94, pp. 19]{AA}
$$H_{p,q}^{m,n}\left[z\Big|^{(A_p, a_p)}_{(B_q, b_q)}\right]=\theta(z^d),\;|z|\longrightarrow\infty,\;\textrm{
where}\;d=\min_{1\leq j\leq n}\left[\frac{\Re(a_j)-1}{A_j}\right].$$
In our case $n=p$ and $a_j=1-\alpha_j$ and consequently $d=-(\Re(\rho)+\Re(\nu)+\min_{1\leq j\leq p}\frac{\Re(\alpha_j)}{A}),$ thus we get
\begin{equation}\label{sana}
\chi(z)=\theta\left(z^{-(\rho+\nu+M)}\right),\;\textrm{where}\; M=\min_{1\leq j\leq p}\frac{\alpha_j}{A}.
\end{equation}
Now, combining (\ref{sana}) with the hypotheses (\ref{hhh}) and (\ref{sawsen}), we deduce that the function $z\longmapsto\chi(z)$ is in $L^1(0,\infty).$ In addition, as  $z\mapsto\mathcal{J}_\nu(zx)$ is an even function and using the integral representation (\ref{sou}), we deduce that  $z\longmapsto\chi(z)$ is an even function, and consequently thus function is in $L^1(\mathbb{R}).$\\
Secondly, we calculate the Fourier transform of the function $z\longmapsto\chi(z).$ Since $z\longmapsto\chi(z)$ is an even function then the the Fourier transform  can be written as a Hankel transform (see \cite[Lemma 1.1]{AK}, when $\alpha=-1/2)$, more precisely, 
\begin{equation}
\mathcal{F}\left(\chi\right)(z)=\sqrt{\frac{2}{\pi}}\int_0^\infty \chi(x)\cos(xz)dz.
\end{equation}
We now make use of the following formula \cite[Eq. 2.50, p. 58]{AA}
\begin{equation}
\int_0^\infty x^{\rho-1}\cos(xz) H_{p,q}^{m,n}\left[x\Big|^{(A_p, a_p)}_{(B_q, b_q)}\right]dx=\frac{2^{\rho-1}\sqrt{\pi}}{z^\rho}H_{p+2,q}^{m,n+1}\left[\frac{2}{z}\Big|^{(\frac{1}{2},\frac{2-\rho}{2}),(A_p, a_p),(\frac{1}{2},\frac{1-\rho}{2})}_{\;\;\;\;\;\;\;\;\;\;(B_q, b_q)}\right]
\end{equation}
where $z>0,\;\rho\in\mathbb{C}$ such that
$$\Re(\rho)+\min_{1\leq j\leq m}\Re\left(\frac{b_j}{B_j}\right)>0\;\;\textrm{and}\;\;\Re(\rho)+\max_{1\leq j\leq n}\Re\left(\frac{a_j-1}{A_j}\right)<1.$$
In our case $\rho\longrightarrow 1-(\rho+\nu),\;m=1\;n=p,\;b_1=\frac{\rho+\nu}{2}$ and $B_1=\frac{1}{2},$ thus $$\Re(\rho)+\min_{1\leq j\leq m}\left(\frac{b_j}{B_j}\right)=1>0.$$
Therefore,
\begin{equation}
\begin{split}
\mathcal{F}\left(\chi\right)(z)&=2^{1-\rho-\nu}z^{\rho+\nu-1}H_{p+2,p+2}^{1,p+1}\left[\frac{8}{z}\Big|^{(\frac{1}{2},\frac{1+\rho+\nu}{2}),(A, 1-\alpha_p),(\frac{1}{2},\frac{\rho+\nu}{2})}_{(\frac{1}{2},\frac{\rho+\nu}{2}),(A, 1-\beta_p),(\frac{1}{2},\frac{\rho-\nu}{2})}\right]\\
&=2^{1-\rho-\nu}z^{\rho+\nu-1}H_{p+2,p+2}^{p+1,1}\left[8z\Big|_{(\frac{1}{2},\frac{1-(\rho+\nu)}{2}),(A, \alpha_p),(\frac{1}{2},1-\frac{\rho+\nu}{2})}^{(\frac{1}{2},1-\frac{\rho+\nu}{2}),(A, \beta_p),(\frac{1}{2},1-\frac{\rho-\nu}{2})}\right].
\end{split}
\end{equation}
Finally, using the fact that the Fourier transform for a function in $L^1$ and positive definite function  is non-negative ( see for example \cite[theorem 6.6]{FD} or \cite[Theorem 6.11, p. 74]{HW}). So, the proof is completes.
\end{proof}
\subsection{Zeros of the Fox-Wright functions}
\begin{theorem} Keeping the notation and constraints of Corollary \ref{corollary}. Then, all the roots of the Fox-Wright function ${}_p\Psi_q\left[^{(\beta_p,B_q)}_{(\alpha_p,A_p)}\Big|z\right]$ are in the left-hand half-plane $\Re z\leq 0.$
\end{theorem}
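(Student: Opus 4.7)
The plan is to lift the Laplace integral representation of Theorem \ref{T1} to a result on the location of zeros. Under the hypotheses of Corollary \ref{corollary}, the kernel $H_{q,p}^{p,0}(t)$ is non-negative on $(0, \rho)$, so Theorem \ref{T1} realizes
\[
F(z) := {}_p\Psi_q\!\left[^{(\alpha_p, A_p)}_{(\beta_q, B_q)}\Big|z\right] = \int_0^\rho e^{zt}\, w(t)\, dt, \qquad w(t) := \frac{H_{q,p}^{p,0}(t)}{t} \ge 0,
\]
as the Laplace transform of a non-negative weight compactly supported on $[0, \rho]$. The first observation is immediate: for every $x \in \mathbb{R}$ the integrand is non-negative and positive on a set of positive measure, whence $F(x) > 0$, which already rules out real zeros and disposes of one half of the assertion.

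For the half-plane containment I would let $z_0 = x_0 + i y_0$ be a putative zero and split $F(z_0) = 0$ into its real and imaginary parts,
\[
\int_0^\rho e^{x_0 t}\cos(y_0 t)\, w(t)\, dt = 0, \qquad \int_0^\rho e^{x_0 t}\sin(y_0 t)\, w(t)\, dt = 0,
\]
so that the positive tilted measure $e^{x_0 t} w(t)\, dt$ on $[0, \rho]$ annihilates both $\cos(y_0 t)$ and $\sin(y_0 t)$. Combining this with the conjugate-pair symmetry of zeros (the Taylor coefficients of $F$ are real and positive) and with the triangle-inequality bound $|F(z)| \le F(\Re z)$, the task reduces to showing that no $(x_0, y_0)$ with $x_0$ in the excluded half-plane can realise the above orthogonality.

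The natural tool here is a Phragm\'en--Lindel\"of / indicator-diagram analysis: $F$ is entire of exponential type $\rho$, and the Laplace representation pins down its Polya indicator as $h(\theta) = \rho\max(\cos\theta, 0)$, so its conjugate indicator diagram is the segment $[0, \rho]$ on the positive real axis. This asymmetric growth, together with the asymptotic $F(z) \sim (\mathrm{const})\, e^{z\rho}$ as $\Re z \to +\infty$, should confine the zero set to a single half-plane. The expected main obstacle is converting this indicator information into an honest exclusion of complex zeros from the opposite half-plane: the bound $|F(z)| \le F(\Re z)$ alone is insufficient (there are positive, compactly supported weights whose Laplace transforms do admit zeros in the ``wrong'' half-plane), and one must exploit the specific structural properties of $H_{q,p}^{p,0}$ inherited from Corollary \ref{corollary} beyond the mere positivity of a generic weight.
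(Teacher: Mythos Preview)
Your proposal has a genuine gap, and you have essentially diagnosed it yourself: the indicator-diagram heuristic you sketch cannot by itself exclude zeros from a half-plane, because (as you note) there exist non-negative compactly supported weights whose Laplace transforms \emph{do} have zeros with $\Re z>0$. Everything you write up to that point---the Laplace representation, the real-axis positivity, the orthogonality reformulation---is correct but insufficient, and you stop precisely at the moment where the actual argument begins. Saying that ``one must exploit the specific structural properties of $H_{q,p}^{p,0}$'' is not a proof step; it is a placeholder for the missing idea.

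The missing idea is monotonicity. The paper does not use growth/indicator information at all; instead it invokes an identity of Karp--Prilepkina to show that $t\mapsto H_{q,p}^{p,0}(t)$ is \emph{decreasing} on $(0,\rho)$, not merely non-negative. After the substitution $t=1-u$ and an application of the shift property of the $H$-function, the representation becomes
\[
e^{-z}\,{}_p\Psi_q(z)=\int_{1-\rho}^{1} e^{-zu}\,g(u)\,du
\]
with $g$ non-negative and \emph{increasing} on its interval of integration. This is exactly the hypothesis of a classical P\'olya-type theorem on zeros of finite Laplace transforms of monotone densities (stated in the paper as Theorem~2.1.7 of Sedletskii), which forces all zeros into the closed half-plane $\Re z\le 0$. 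So the structural property you were looking for is monotonicity of the $H$-kernel, and the tool that converts it into a half-plane statement is the P\'olya--Sedletskii theorem, not Phragm\'en--Lindel\"of.
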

\begin{proof}By using the following identity \cite[Theorem 8]{Karp}
\begin{equation}
H_{q,p}^{p,0}\left(z\Big|^{(\beta_q,B_q)}_{(\alpha_p,A_p)}\right)=\frac{1}{\log(\rho/z)}\int_{z/\rho}^1H_{q,p}^{p,0}\left(\frac{z}{u}\Big|^{(\beta_q,B_q)}_{(\alpha_p,A_p)}\right)\frac{Q(u)}{u}du
\end{equation}
where $Q(u)$ is defined by
$$Q(u)=\sum_{i=1}^p\frac{t^{\alpha_i/A_i}}{1-t^{1/A_i}}-\sum_{j=1}^q\frac{t^{\beta_j/B_j}}{1-t^{1/B_j}},\;t\in(0,1),$$
we deduce that the function 
$$t\mapsto H_{q,p}^{p,0}\left(t\Big|^{(\beta_p,B_q)}_{(\alpha_p,A_p)}\right)$$
 decreasing on $(0, 1).$ On the other hand, by means of the integral representation (\ref{fr1}) and we make the following change of variables $t=1-u$ we get
\begin{equation}\label{yui}
e^{-z}{}_p\Psi_q\Big[_{(\beta_q,B_q)}^{(\alpha_p,A_p)}\Big|z\Big]=\int_{1-\rho}^1 e^{-zu}H_{q,p}^{p,0}\left(1-u\Big|^{(B_q,\beta_p)}_{(A_p,\alpha_p)}\right)\frac{du}{1-u}
\end{equation}
Taking into account the obvious equation \cite[Property 2.5, Eq. 2.1. 5]{AA}
\begin{equation}
z^\sigma H_{q,p}^{m,n}\left(z\Big|_{(B_q,\beta_q)}^{(A_p,\alpha_p)}\right)= H_{q,p}^{m,n}\left(z\Big|_{(B_q,\beta_q+\sigma B_q)}^{(A_p,\alpha_p+\sigma A_p)}\right),\;\sigma\in\mathbb{C},
\end{equation}
and using (\ref{yui}) we get 
\begin{equation}
e^{-z}{}_p\Psi_q\Big[_{(\beta_q,B_q)}^{(\alpha_p,A_p)}\Big|z\Big]=\int_{1-\rho}^1 e^{-zu}H_{q,p}^{p,0}\left(1-u\Big|^{(B_q,\beta_q-B_q)}_{(A_p,\alpha_p-A_p)}\right)du.
\end{equation}
Since the function $H_{q,p}^{p,0}\left(1-u\Big|^{(B_q,\beta_q-B_q)}_{(A_p,\alpha_p-A_p)}\right)$ is non-negative and increasing on $(0, \rho),$ we deduce that the hypothesis of Theorem 2.1.7 in \cite{SE} is fulfilled.
\end{proof}
\subsection{Extended Luke's inequalities}
Our aim in the this section is to present  two-sided exponential inequalities for the Fox-Wright function. As an application, we gave a generalization of Luke's inequalities.

\begin{theorem}\label{T6} Suppose that the hypotheses of Corollary \ref{corollary} be satisfied, then the following inequalities holds:
\begin{equation}\label{§§}
\psi_{0,0} e^{-\psi_{0,1}\psi_{0,0}^{-1}z}\leq{}_{p}\Psi_q\Big[_{(\beta_q, B_q)}^{(\alpha_p, A_p)}\Big|-z\Big]\leq\psi_{0,0}-\frac{\psi_{0,1}}{\rho}(1-e^{-\rho z}),\;z\in\mathbb{R}.
\end{equation}
\end{theorem}
\begin{proof} We recall the Jensen's integral inequality \cite[ Chap. I, Eq. (7.15)]{MI},
\begin{equation}\label{C}
\varphi\left(\int_a^b f(s)d\mu(s)\Big/\int_a^b d\mu(s)\right)\leq \int_a^b \varphi(f(s))d\mu(s)\Big/\int_a^b d\mu(s),
\end{equation}
if $\varphi$ is convex and $f$ is integrable with respect to a probability  measure $\mu$. Letting $\varphi_{z}(s)= e^{-zt},\;f(t)=t,$ and 
$$d\mu(t)= H_{q,p}^{p,0}\left(t\Big|^{(B_q,\beta_q)}_{(A_p,\alpha_p)}\right)\frac{dt}{t}.$$
Thus,
$$\int_0^\rho d\mu(t)=\prod_{i=1}^p\frac{\Gamma(\alpha_i)}{\Gamma(\beta_i)},\;\textrm{and}\;\;\int_0^\rho f(t)d\mu(t)=\frac{\prod_{i=1}^p\Gamma(\alpha_i+A_i)}{\prod_{i=1}^q\Gamma(\beta_i+B_i)},$$
and
$$\int_0^\rho \phi_z(f(t))d\mu(t)={}_p\Psi_q\Big[_{(\beta_q,B_q)}^{(\alpha_p,A_p)}\Big|z\Big].$$
This proves the lower bound asserted by Theorem \ref{T6}.  In order to demonstrate the upper bound, we will apply the converse Jensen inequality, due to Lah and Ribari\'c, which reads as follows. Set
$$A(f)=\int_m^M f(s)d\sigma(s)\Big/ \int_m^M d\sigma(s),$$
where $\sigma$ is a non-negative measure and $f$ is a continuous function. If $-\infty<m<M<\infty$ and $\varphi$ is convex on $[m,M],$ then according to \cite[Theorem 3.37]{PE}
\begin{equation}\label{CC}
(M-m)A(\varphi(f))\leq (M-A(f))\varphi(m)+(A(f)-m)\varphi(M).
\end{equation}
Setting $\varphi_z(t)=e^{-zt},\;d\sigma(t)=d\mu(t),\;f(s)=s$ and $[m,M]=[0,\rho]$, we complete the proof of the upper bound in (\ref{§§}).
\end{proof}

\begin{corollary}\label{C1ààà} Let $\lambda>0$ and under the conditions of Theorem \ref{T6}, then the following two-sided inequality holds true:
\begin{equation}\label{abel}
\frac{\psi_{0,0}\Gamma(\lambda)}{\left(1+\frac{\psi_{0,1}}{\psi_{0,0}}z\right)^\lambda}\leq{}_{p+1}\Psi_p\left[_{\;\;\;(\beta_q,B_q)}^{(\lambda,1),(\alpha_p,A_p)}\Big|-z\right]\leq \Gamma(\lambda)\left[\psi_{0,0}-\frac{\psi_{0,1}}{\rho}\left(1-\frac{1}{(1+\rho z)^\lambda}\right)\right],\;z>0.
\end{equation}
\end{corollary}
\begin{proof}Multiply inequalities (\ref{§§})  written for ${}_p\Psi_q\left[_{(b,\alpha)}^{(\alpha_p,A_p)}\Big|-zt\right]$ by $e^{-t}t^{\lambda-1}$  integrate using using the integral representation \cite[eq. 7]{P}
\begin{equation}
\int_0^\infty e^{-t}t^{\lambda-1}{}_p\Psi_q\left[_{(\beta_q,B_q)}^{(\alpha_p,A_p)}\Big|-zt\right]dt={}_{p+1}\Psi_q\left[_{(\beta_q, B_q)}^{(\lambda,1),(\alpha_p, A_p)}\Big|-z\right]
\end{equation}
and make use of the following known formula
$$\int_0^\infty t^\lambda e^{-\sigma t}dt=\frac{\Gamma(\lambda+1)}{\sigma^{\lambda+1}},$$
where  $\lambda>-1$ and $\sigma>0.$ This completes the proof of the two-sided inequality (\ref{abel}) asserted by Corollary \ref{C1ààà}.
\end{proof}

\emph{Concluding remarks:}\\

\noindent\textbf{a.} Under the hypotheses of Theorem \ref{T2}, we get
\begin{equation}\label{abel1}
\psi_{0,0} e^{-\psi_{0,1}\psi_{0,0}^{-1}z}\leq{}_{p}\Psi_p\Big[_{(\beta_q, A)}^{(\alpha_p, A)}\Big|-z\Big]\leq\psi_{0,0}-\psi_{0,1}(1-e^{-z}),\;z\in\mathbb{R}.
\end{equation}
and consequently
\begin{equation}\label{§§1}
\frac{\psi_{0,0}\Gamma(\lambda)}{\left(1+\frac{\psi_{0,1}}{\psi_{0,0}}z\right)^\lambda}\leq{}_{p+1}\Psi_p\left[_{\;\;\;(\beta_q,A)}^{(\lambda,1),(\alpha_p,A)}\Big|-z\right]\leq \Gamma(\lambda)\left[\psi_{0,0}-\psi_{0,1}\left(1-\frac{1}{(1+z)^\lambda}\right)\right],\;z>0.
\end{equation}
We note that you can proved the inequalities (\ref{abel1}) by using the integral representation (\ref{!!!!}). Obviously,  let $d\mu(u)=A^{-1}G_{p,p}^{p,0}\left(u^{1/A}\Big|^{\beta_p}_{\alpha_p}\right)\frac{du}{u}=H_{p,p}^{p,0}\left(u\Big|^{(A,\beta_p)}_{(A,\alpha_p)}\right)\frac{du}{u},\;\varphi_z(u)=e^{-zu}$ and $[m,M]=[0,1]$ and  repeating the same calculations as above with the inequalities (\ref{abel}). \\

\noindent\textbf{b.} Suppose that the hypotheses of Theorem \ref{T2} (or Corollary \ref{corollary}) are satisfied and taking in (\ref{§§1})  (resp. in (\ref{abel})) the value $A=1$ (resp. $A_p=B_p=1,\;p=q$ ) and using the identities (\ref{rrrrr}), we re-obtain the Luke's inequalities for the hypergeometric function ${}_{p+1}F_p:$ (see \cite[ Theorem 13, Eq. (4.20)]{LU})
\begin{equation}
\frac{1}{(1+\theta z)^\sigma}\leq {}_{p+1}F_p\Big[^{\;\sigma, \alpha_1,...,\alpha_p}_{\;\beta_1,...,\beta_p}\Big|-z\Big]\leq1-\theta+\frac{\theta}{(1+x)^\sigma},\;\Big(\theta=\prod_{j=1}^p\frac{\alpha_j}{\beta_j},\;z>0.\Big)
\end{equation}

\subsection{New inequalities for the generalized Mathieu's series}
The generalized Mathieu series is defined by \cite{Z}:
\begin{equation}\label{math}
S_\mu^{(\alpha,\beta)}(r;\textbf{a})=S_\mu^{(\alpha,\beta)}(r;\{a_k\}_{k=0}^\infty)=\sum_{k=1}^\infty\frac{2a_k^\beta}{(r^2+a_k^\alpha)^\mu},\;(r,\alpha,\beta,\mu>0),
\end{equation}
where it is tacitly assumed that the positive sequence  $$\textbf{a}=(a_k)_k,\;\textrm{such\;that}\;\lim_{k\longrightarrow\infty}a_k=\infty,$$
is so chosen  that the infinite series in the definition (\ref{math}) converges, that is, that the following auxiliary series:
$$\sum_{k=0}^\infty \frac{1}{a_k^{\mu\alpha-\beta}}$$
is convergent. 
\begin{theorem}\label{CC} Let $\alpha,\beta,\nu,\mu>0$ such that $\nu(\mu \alpha-\beta)>1$ and $\nu\alpha=1.$ Then the following inequalities holds true:
\begin{equation}\label{kjk}
\begin{split}
L &\leq S_{\mu}^{(\alpha,\beta)}(r;\{k^\nu\}_{k=1}^\infty)\leq R,\;r>0,
\end{split}
\end{equation}
where
$$L=2\zeta(\nu(\mu \alpha-\beta),\frac{\mu}{\nu(\mu \alpha-\beta)}r^2+1),$$
and
$$R=2\left(1-\frac{\mu}{\nu(\mu \alpha-\beta)}\right)\zeta(\nu(\mu \alpha-\beta))+\frac{2\mu}{\nu(\mu \alpha-\beta)}\zeta(\nu(\mu \alpha-\beta),r^2+1),$$
and $\zeta(s,a)$ is the Hurwitz Zeta Function defined by:
$$\zeta(s,a)=\sum_{n=0}^\infty\frac{1}{(n+a)^s},\;\Re(s)>1.$$
\end{theorem}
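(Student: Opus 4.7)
The plan is to establish the bounds term by term on the summands of $S_\mu^{(\alpha,\beta)}(r;\{k^\nu\}_{k=1}^\infty)$, using the Luke-type sandwich proved in Corollary \ref{C1���}. First I would use the hypothesis $\nu\alpha=1$ to simplify the generic term as
$$\frac{2(k^\nu)^\beta}{\bigl(r^2+(k^\nu)^\alpha\bigr)^\mu}=\frac{2}{k^s}\cdot\frac{1}{(1+r^2/k)^\mu},\qquad s:=\nu(\mu\alpha-\beta)=\mu-\nu\beta>1,$$
so that the whole series is $S=2\sum_{k\ge1}k^{-s}(1+r^2/k)^{-\mu}$. The convergence hypothesis $\nu(\mu\alpha-\beta)>1$, i.e.\ $s>1$, is precisely what will guarantee the convergence of the Hurwitz zeta series appearing in $L$ and $R$.

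Next I would apply a two-sided Luke-type estimate (in the spirit of Corollary \ref{C1���} and the classical Luke inequality for ${}_{p+1}F_p$ recalled immediately after it) in order to sandwich the power factor $(1+r^2/k)^{-\mu}$ between
$$\frac{1}{\bigl(1+\tfrac{\mu}{s}\tfrac{r^2}{k}\bigr)^{s}}\quad\text{and}\quad\Bigl(1-\frac{\mu}{s}\Bigr)+\frac{\mu/s}{(1+r^2/k)^{s}}.$$
Multiplying through by $2/k^s$ absorbs the outer $k^{-s}$ into the inner denominators and produces term-by-term estimates of the shape
$$\frac{2}{(k+\mu r^2/s)^{s}}\;\le\;\frac{2\,k^{\nu\beta}}{(r^2+k)^{\mu}}\;\le\;\frac{2(1-\mu/s)}{k^s}+\frac{2\mu/s}{(k+r^2)^{s}}.$$

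Finally I would sum over $k\ge1$ and recognise each resulting series as a value of the Hurwitz zeta function $\zeta(s,a)=\sum_{n\ge0}(n+a)^{-s}$. The leftmost sum becomes $2\zeta(s,1+\mu r^2/s)=L$, the middle sum is by definition $S_\mu^{(\alpha,\beta)}(r;\{k^\nu\}_{k=1}^\infty)$, and the rightmost sum collects to $2(1-\mu/s)\zeta(s)+(2\mu/s)\zeta(s,1+r^2)=R$, matching the quantities stated in the theorem.

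The step I expect to be the main obstacle is the pointwise sandwich in the second stage, because a direct reading of Luke's inequality for ${}_2F_1(\mu,\alpha;\alpha;-z)$ collapses to equality (the Luke constant $\theta=\alpha/\alpha$ equals $1$, so the lower and upper bounds coincide). One therefore has to produce the sandwich by a different route: either by going back to the Jensen / converse-Jensen mechanism used in Theorem \ref{T6} with a Beta-type measure that realises $(1+z)^{-\mu}$ as a suitable mean of $(1+z)^{-s}$, or, more directly, by a monotonicity study of the map $s\mapsto s\log(1+\mu z/s)$ (together with a convexity argument for the upper bound, exploiting $0<s\le\mu$). Once that single-variable inequality is in place, the rest of the proof is routine term-by-term bookkeeping with Hurwitz zeta.
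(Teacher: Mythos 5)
You have put your finger on the right spot, but the step you flag as ``the main obstacle'' does not just resist proof --- it is false in exactly the regime the theorem imposes. Write $s=\nu(\mu\alpha-\beta)$; the hypothesis $\nu\alpha=1$ forces $s=\mu-\nu\beta<\mu$. Your proposed lower bound $(1+\tfrac{\mu}{s}z)^{-s}\le(1+z)^{-\mu}$ is equivalent to $f(\mu)\le f(s)$ for $f(t)=t\log(1+\mu z/t)$, and since $f'(t)=\log(1+u)-u/(1+u)>0$ with $u=\mu z/t$, the map $f$ is increasing, so $s<\mu$ yields the \emph{reverse} inequality (concretely, $\mu=2$, $s=1$, $z=1$ gives $(1+2)^{-1}=1/3>1/4=(1+1)^{-2}$). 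The upper half of your sandwich fails as well, since $1-\mu/s<0$ there. For comparison, the paper's proof is structurally different: it inserts the two-sided inequality of Theorem \ref{T6} into the integral representation $S_{\mu}^{(\alpha,\beta)}(r;\{k^{\nu}\})=\frac{2}{\Gamma(\mu)}\int_0^\infty\frac{x^{s-1}}{e^x-1}\,{}_1\Psi_1\big[{}^{(\mu,1)}_{(s,\nu\alpha)}\big|-r^2x^{\nu\alpha}\big]\,dx$ and evaluates the resulting integrals via $\int_0^\infty x^{s-1}e^{-ax}(1-e^{-x})^{-1}\,dx=\Gamma(s)\zeta(s,a)$. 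But it suffers from the identical defect: Theorem \ref{T6} requires the hypotheses of Corollary \ref{corollary}, which for this ${}_1\Psi_1$ reduce to $\mu<s$, incompatible with $s=\mu-\nu\beta$. In fact your termwise sandwich is precisely the Laplace transform, against $x^{s-1}e^{-kx}\,dx$ after expanding $(e^x-1)^{-1}=\sum_{k\ge1}e^{-kx}$, of the paper's pointwise Luke inequality, so the two arguments stand or fall together.

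They fall: the statement itself is false as published. Take $\nu=\alpha=\beta=1$, $\mu=3$ (so $s=2>1$) and $r=1$. Then $S_{\mu}^{(\alpha,\beta)}(1;\{k\})=2\sum_{k\ge1}k(1+k)^{-3}=2(\zeta(2)-\zeta(3))\approx0.8858$, while $L=2\zeta(2,5/2)=2\psi'(5/2)\approx0.9807$ and $R=-\zeta(2)+3\zeta(2,2)\approx0.2899$, so both claimed bounds are violated. Any repair --- yours or the paper's --- must first restore a parameter range in which the Luke-type sandwich (equivalently, your single-variable inequality $\mu\log(1+z)\le s\log(1+\mu z/s)$) actually holds, namely $s\ge\mu$, and that is ruled out by $\nu\alpha=1$ together with $\beta,\nu>0$.
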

\begin{proof} We make use  the representation integral for the Mathieu's series \cite{KZ}, 
$$S_{\mu}^{(\alpha,\beta)}(r;\{k^\nu\}_{k=1}^\infty)=\frac{2}{\Gamma(\mu)}\int_0^\infty \frac{x^{\nu(\mu \alpha-\beta)-1}}{e^{x}-1}{}_1\Psi_1\left(^{(\mu,1)}_{(\nu(\mu \alpha-\beta),\nu\alpha)}\Big|-r^2x^{\nu\alpha}\right)dx,$$
with (\ref{§§}) and using the following formula \cite[Eq. 8, p.313]{ER}
$$\int_0^\infty\frac{x^{s-1}e^{-ax}}{1-e^{-x}}dx=\Gamma(s)\zeta(s,a),\;\Re(s)>1,\;\Re(a)>0,$$
we obtain the inequalities (\ref{kjk}) asserted by Theorem \ref{CC}.
\end{proof}
\begin{corollary}Assume that $\alpha,\beta,\nu,\mu>0$ such that $\nu(\mu \alpha-\beta)>2$ and $\nu\alpha=1.$ Then
\begin{equation}
L_1\leq S_{\mu}^{(\alpha,\beta)}(r;\{k^\nu\}_{k=1}^\infty)\leq R_1, \;r>0,
\end{equation}
where
$$L_1=\frac{2e^{-(\nu(\mu \alpha-\beta)-1)\psi\left(\frac{\mu r^2}{\nu(\mu \alpha-\beta)}+\frac{3}{2}\right)}}{\nu(\mu \alpha-\beta)-1}$$
and
$$R_1=2\left(1-\frac{\mu}{\nu(\mu \alpha-\beta)}\right)\frac{e^{(\nu(\mu \alpha-\beta)-1)\gamma}}{\nu(\mu \alpha-\beta)-1}+\frac{2\mu}{\nu(\mu \alpha-\beta)}\frac{e^{-(\nu(\mu \alpha-\beta)-1)\psi(r^2+1)}}{\nu(\mu \alpha-\beta)-1}$$
with $\gamma$ is Euler-Mascheroni constant and $\psi$ is the digamma function.
\end{corollary}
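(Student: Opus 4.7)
The plan is to derive the corollary directly from Theorem \ref{CC} by replacing each occurrence of the Hurwitz zeta function in the bounds $L$ and $R$ with a matching exponential expression in $\psi$ and $\gamma$. Since the hypothesis $\nu(\mu\alpha-\beta)>2$ is a strict strengthening of the hypothesis $\nu(\mu\alpha-\beta)>1$ of Theorem \ref{CC}, I can apply that theorem verbatim with $s:=\nu(\mu\alpha-\beta)$ to obtain
$$2\,\zeta\!\left(s,\tfrac{\mu r^{2}}{s}+1\right)\;\leq\;S_{\mu}^{(\alpha,\beta)}(r;\{k^{\nu}\}_{k=1}^{\infty})\;\leq\;2\!\left(1-\tfrac{\mu}{s}\right)\!\zeta(s)+\tfrac{2\mu}{s}\,\zeta(s,r^{2}+1).$$

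The heart of the proof is then the following pair of classical two-sided bounds for the Hurwitz zeta function, valid for $s>1$ and $a>0$:
$$\frac{e^{-(s-1)\psi(a+\frac{1}{2})}}{s-1}\;\leq\;\zeta(s,a)\;\leq\;\frac{e^{-(s-1)\psi(a)}}{s-1}.$$
Applying the left estimate with $a=\mu r^{2}/s+1$, so that $\psi(a+\tfrac{1}{2})=\psi(\mu r^{2}/s+\tfrac{3}{2})$, converts the lower bound from Theorem \ref{CC} into $L_{1}$. Applying the right estimate with $a=1$, using $\psi(1)=-\gamma$, gives $\zeta(s)\leq e^{(s-1)\gamma}/(s-1)$, which produces the first summand of $R_{1}$; applying it again with $a=r^{2}+1$ produces the second summand. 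What remains is bookkeeping of the multiplicative constants $2(1-\mu/s)$ and $2\mu/s$, which transfer unchanged to $R_{1}$.

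The principal technical step is justifying the pair of Hurwitz zeta inequalities above. The natural route starts from the integral representation $\Gamma(s)\zeta(s,a)=\int_{0}^{\infty}x^{s-1}e^{-ax}/(1-e^{-x})\,dx$ together with the Laurent expansion $(s-1)\zeta(s,a)=1-\psi(a)(s-1)+O((s-1)^{2})$ at $s=1$; the function $s\mapsto\log[(s-1)\zeta(s,a)]$ vanishes at $s=1$ with slope $-\psi(a)$, and its concavity in $s$ (which may be verified via the representation of its second derivative as $\mathrm{Var}_{\nu}[\log(n+a)]-(s-1)^{-2}$, with $\nu$ the probability mass $\nu(n)\propto(n+a)^{-s}$) provides the upper bound as a tangent-line estimate. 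The lower bound is obtained analogously by a Euler--Maclaurin midpoint comparison, which shifts the argument of $\psi$ by $\tfrac{1}{2}$. Alternatively, these inequalities are well documented in the literature on sharp bounds for the Hurwitz zeta function, so they can be invoked by reference. Once they are in hand, the corollary follows by straightforward substitution.
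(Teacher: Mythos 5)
Your proposal is correct and follows essentially the same route as the paper: the paper's proof consists precisely of combining Theorem \ref{CC} with Theorem 3.1 of Batir's paper on the Hurwitz zeta function, which is exactly the pair of two-sided exponential-of-digamma bounds $\frac{e^{-(s-1)\psi(a+1/2)}}{s-1}\leq\zeta(s,a)\leq\frac{e^{-(s-1)\psi(a)}}{s-1}$ that you state and then apply term by term (with $a=\mu r^2/s+1$, $a=1$, and $a=r^2+1$). The only difference is that you additionally sketch a derivation of those zeta inequalities, whereas the paper simply cites them.
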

\begin{proof}The result follows from Theorem \ref{CC} combined with Theorem 3.1 \cite{NB}.
\end{proof}

\end{document}